\newcounter{theorem}[section]\setcounter{theorem}{0}
\renewcommand{\thetheorem}{\arabic{section}.\arabic{theorem}}
\newenvironment{conjecture}{\bgroup{\hspace*{-0.15 cm}\bf{Conjecture}
\thetheorem.}\bgroup\it}{\egroup \egroup\par\medskip}
\newenvironment{theorem}{\par\medskip\noindent\refstepcounter{theorem}
\bgroup{\hspace*{-0.15 cm}\bf{Theorem}
\thetheorem.}\bgroup\it}{\egroup \egroup\par\medskip}
\newenvironment{proposition}{\par\medskip\noindent\refstepcounter{theorem}
\bgroup{\hspace*{-0.15 cm}\bf{Proposition}
\thetheorem.}\bgroup\it}{\egroup \egroup\par\medskip}
\newenvironment{definition}{\par\medskip\noindent\refstepcounter{theorem}
\bgroup{\hspace*{-0.15 cm}\bf{Definition}
\thetheorem.}\bgroup}{\egroup \egroup\par\medskip}
\newenvironment{question}{\par\medskip\noindent\refstepcounter{theorem}
\bgroup{\hspace*{-0.15 cm}\bf{Question} \thetheorem.}\bgroup}{\egroup
\egroup\par\medskip}
\newenvironment{remark}{\par\medskip\noindent\refstepcounter{theorem}
\bgroup{\hspace*{-0.15 cm}\bf{Remark} \thetheorem.}\bgroup}{\egroup
\egroup\par\medskip}
\newcommand{\C}{\mathbb C}
\newcommand{\R}{\mathbb R}
\newcommand{\Q}{\mathbb Q}
\newcommand{\Z}{\mathbb Z}
\newcommand{\F}{\mathbb F}
\providecommand{\abs}[1]{\lvert#1\rvert}
\def\p{{\mathfrak p}}
\def\P{{\mathbb P}}
\def\H{{\mathbb H}}
\def\pgl{{\rm PGL}}
\def\psl{{\rm PSL}}
\title{On the Integral Cohomology of Bianchi groups}  
\author{Mehmet Haluk \c{S}eng\"un}
\date{}
\begin{document}
\maketitle
\abstract{Extensive and systematic machine computations are carried out to 
investigate the integral cohomology of the Euclidean Bianchi groups and their congruence subgroups. 
The collected data give insight on several aspects, including the asymptotic behaviour of the 
torsion in the first homology. Along with the experimental work, some basic properties of the 
integral cohomology are recorded with an eye towards the liftability issue of Hecke eigenvalue systems.}

\footnotetext[1]{{\it 2010 Mathematics Subject Classification}. Primary 11F41, 11F75; Secondary 22E40}
\footnotetext[2]{{\it Key words and phrases.} Bianchi groups, torsion in the homology, cuspidal cohomology }

\section{Introduction}
Bianchi groups are groups of the form $\psl_2(\mathcal{O})$ where $\mathcal{O}$ is the ring of integers of an imaginary quadratic field. 
First studied by L.Bianchi in 1892 \cite{bianchi}, these groups form an important class of arithmetic Kleinian groups. In fact, it is well known that any non-cocompact arithmetic Kleinian group, after conjugation, is commensurable with a Bianchi group.

In this paper, I will be interested in the cohomology of Bianchi groups with certain $\mathcal{O}$-module coefficients. These cohomology groups are 
fundamental to the study of Bianchi modular forms, that is, modular forms (for {\bf GL}$_2$) over an imaginary quadratic field. Unlike 
their analogs over totally real fields, i.e. Hilbert modular forms, the arithmetic of Bianchi modular forms is little understood. One of the 
features that obstruct the application of standard methods is the torsion in the cohomology of Bianchi groups. 

The first computations of torsion were performed in 1981 by Elstrodt, Grunewald and Mennicke \cite{egm} where they computed the abelianizations $\Gamma_0(\mathfrak{p})^{ab}$ ($\simeq H_1(\Gamma_0(\mathfrak{p}),\Z)$) of congruence subgroups $\Gamma_0(\mathfrak{p})$ for prime ideals $\mathfrak{p}$ of $\Z[i]$ of residue degree 1 and norm $\leq 400$. In the same paper, numerical evidence suggesting a connection between some of the 2-torsion classes and certain $S_3$-extension of $\Q(i)$ was exhibited. Later in 1994, Figueiredo \cite{figu} provided several more examples which suggested the same kind of connection between certain 3-torsion classes and certain $\textrm{SL}_2(\F_3)$-extensions of the fields $Q(\sqrt{-d})$ with $d=1,2,3$. Recently further examples have been found by myself in \cite{sen_a5} and by Torrey in \cite{torrey} where she formulates and tests an analogue of the, now proven, strong modularity conjecture of Serre \cite{serre}. 

In 1983, Grunewald and Schwermer \cite{gs} determined the conjugacy classes of small index subgroups of several Bianchi groups. In particular, they computed the abelianizations of a vast amount of small index ($\leq$ 12) subgroups. Depending on the data they collected and the data in \cite{egm}, they speculated that for any finite index subgroup $\Gamma$ of a Bianchi group $G$, if a prime $p$ appears as an  exponent for an element of $\Gamma^{ab}$, then $p$ should be smaller than half of the index of $\Gamma$ in $G$.   

In 1988, Taylor in his unpublished thesis \cite{taylor} proved the existence of  $p^m$-torsion classes in $H_1(\Gamma_1(Np^r), E_{k,\ell}(\mathcal{O}))$ (notation will be explained later) with $k \not = \ell$ under special circumstances. In her unpublished 2000 thesis \cite{prip}, Priplata numerically investigated the torsion for some Bianchi groups, mostly for coefficient modules $E_{k,\ell}$ where $k \not = \ell$. It is worth remarking that the (co)homology of Bianchi groups is related to cuspidal Bianchi modular forms only in the parallel weight case, that is, in the case $k = \ell$. 

In this paper I report on my extensive systematic computations of the integral (co)homology of Bianchi groups. More specifically, I worked with the groups $\psl(\mathcal{O}_d)$ and $\pgl_2(\mathcal{O}_d)$ with $-d=1,2,3,7,11$ and also with their congruence subgroups. Motivated by the possibility of congruences between cuspidal Bianchi modular forms and torsion classes, and also the arithmetical connections mentioned above, I limited myself to the (co)homology with parallel weights. The data I collected 
\begin{itemize}
\item show that $H^2_{cusp}$ can have sporadically large torsion part with very little torsion-free part, in particular the speculations of Grunewald and Schwermer mentioned above are false, 
\item support a recent conjecture of Long, Maclachlan and Reid \cite{lmr} on existence of certain families of rational homology spheres, 
\item strongly suggest that an analogue of the very recent result of Bergeron-Venkatesh \cite{bv} on the asymptotic behaviour of the torsion in the homology of cocompact arithmetic congruence lattices in $\textrm{SL}_2(\C)$ holds for Bianchi groups. 
\end{itemize}
Along with the computational work, I record some basic properties of the integral cohomology with an eye towards the liftability issue of Hecke eigenvalue systems. \\

{\bf Acknowledgments} I started this work towards the end of my postdoctoral fellowship within the Arithmetic Geometry Research Group of the University of Duisburg-Essen under SFB/TR 45 of the Deutsche Forschungsgemeinschaft. I am grateful for the wonderful ambiance and hospitality provided by the Mathematics Department. I gratefully acknowledge that all the computations were carried out on the computers of the Institute for Experimental Mathematics, Essen. I thank the excellent Hausdorff Institute for Mathematics in Bonn where I completed the rest of this work as a visitor. I would like to thank Gebhard B\"ockle and Gabor Wiese for enlightening discussions and many comments/corrections on this work. My correspondence with Nicolas Bergeron and Akshay Venkatesh was very encouraging and gave me more directions for experimentation. I am grateful to Nansen Petrosyan for his help with the spectral sequences. Finally I thank J-P. Serre and the referees whose numerous comments improved the paper. \\

{\bf Dedication} After the first draft of this work was completed, {\it Fritz Grunewald}, whom I consider my second PhD adviser, tragically passed away. Over the years, I benefited greatly from his invaluable guidance and generous support. I dedicate this paper to his memory with admiration, gratitude and love.
 
\section{The modules}

Given a commutative a ring $R$, let $E_k(R) \simeq R[x,y]_{k}$ where the latter is the space of homogeneous degree $k$ polynomials in two variables over $R$. Note that $\lbrace x^{k-i}y^i : 0 \leq i \leq k \rbrace $ is an $R$-basis of $E_k(R)$. 

For a polynomial $P(X,Y)$ in $E_k(R)$ and a matrix $\bigl( \begin{smallmatrix} a & b  \\ c & d \\ \end{smallmatrix} \bigr)$ in 
$\textrm{M}_2(R)$, the we have the right action
 $$  \bigl ( P \cdot  \bigl( \begin{smallmatrix} a & b  \\ c & d \\ \end{smallmatrix} \bigr)\bigr ) \bigl( X,Y \bigr )
 = P \bigl ( \bigl( \begin{smallmatrix} a & b  \\ c & d \\ \end{smallmatrix} \bigr) \bigl( \begin{smallmatrix} X  \\ Y \\ \end{smallmatrix} \bigr)  \bigr )= P \bigl( a X+b Y,c X+d Y \bigr ).$$
 
Let $\mathcal{O}$ be the ring of integers of an imaginary quadratic field. Consider the $\textrm{M}_2(\mathcal{O})$-module 
$$E_{k,\ell}(\mathcal{O}):=E_k (\mathcal{O})\otimes_{\mathcal{O}} \overline{E_{\ell}(\mathcal{O})}.$$
Here the overline on the second factor is to indicate that action on the second factor is twisted with complex conjugation. Note that we should insist that $k+\ell$ is even so that $-Id$ acts trivially and thus $\psl(2,\mathcal{O})$ acts on it as well. \\

It is useful to remark that $E_{k,\ell}(\mathcal{O}) \simeq \text{Sym}^k(\mathcal{O}^2) \otimes_{\mathcal{O}} \overline{\text{Sym}}^{\ell}(\mathcal{O}^2)$ as $\textrm{M}_2(\mathcal{O})$-modules where $\text{Sym}^i(\mathcal{O}^2)$ is the $i$th symmetric power of the standard representation of $\textrm{M}_2(\mathcal{O})$ on $\mathcal{O}^2$. Here the overline on the 
second factor means that the action is twisted with complex conjugation.

Let $\pi$ be a prime element of $\mathcal{O}$ over a rational prime $p$. Put $\kappa_{\pi}$ for its residue field. We put 
$$E_{k,l}(\kappa_{\pi}) :=E_{k,\ell}(\mathcal{O}) \otimes_{\mathcal{O}} \kappa_{\pi}.$$
If $p$ splits in $\mathcal{O}$, then 
$$ E_{k,l}(\kappa_{\pi}) \simeq E_k (\kappa_{\pi})\otimes E_{\ell}(\kappa_{\bar{\pi}}).$$
Thus $\psl_2(\mathcal{O})$ acts on it by reduction mod $\pi$ on the first factor and by reduction mod $\bar{\pi}$ on the second.
If $p$ is inert in $\mathcal{O}$, then 
$$ E_{k,l}(\kappa_{\pi}) \simeq E_k (\kappa_{\pi})\otimes E_{\ell}(\kappa_{\pi})^{\sigma}.$$
Here $\psl_2(\mathcal{O})$ acts by reduction mod $\pi$. The action on the second factor is twisted by the nontrivial automorphism $\sigma$ of $\kappa_{\pi}$.

Finally, when $p$ is ramified in $\mathcal{O}$, we have
$$ E_{k,l}(\kappa_{\pi}) \simeq E_k (\kappa_{\pi})\otimes E_{\ell}(\kappa_{\pi}).$$
Here the action of $\psl_2(\mathcal{O})$ is via reduction mod $\pi$ and is the same on both factors.

A result of Brauer and Nesbitt \cite{bn} tells us that in the inert and split cases, the $\psl_2(\mathcal{O})$-modules $E_{k,\ell}(\kappa_{\pi})$ are irreducible only when $0 \leq k,\ell \leq p-1.$ In the ramified case, $E_{k,l}(\kappa_{\pi})$ is never an irreducible $\psl_2(\mathcal{O})$-module unless $k=0 \leq \ell \leq p-1$ or  $\ell =0 \leq k \leq p-1$. For more on the structure of these modules, we refer to reader to \cite{st}.

The following will be used later.
\begin{proposition}\label{pairing} Let $\mathcal{O}$ be the ring of integers of an imaginary quadratic field. Let $k \geq \ell$ and put $R=\mathcal{O}[\frac{1}{k!}]$. Then there is a $\psl_2(\mathcal{O})$-equivariant perfect pairing 
$$E_{k,\ell}(R) \times E_{k,\ell}(R) \rightarrow R.$$
\end{proposition}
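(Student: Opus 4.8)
The plan is to split off the two tensor factors, reduce to constructing a perfect equivariant pairing on $E_m(R)$ for a single weight $m$, and build that pairing from the $\mathrm{SL}_2$-invariant symplectic form on the standard representation. Since $k\ge\ell$ we have $\ell!\mid k!$, so $\ell!$ too is a unit of $R$, and by the remark preceding the statement $E_{k,\ell}(R)\cong E_k(R)\otimes_R\overline{E_\ell(R)}$. It therefore suffices to construct, for every $m$ with $m!\in R^{\times}$, a perfect $\mathrm{SL}_2(\mathcal O)$-equivariant pairing $B_m\colon E_m(R)\times E_m(R)\to R$: then $B_k\otimes B_\ell$ is a perfect pairing on $E_{k,\ell}(R)$ — a tensor product of perfect pairings of finite free modules is perfect — and it is $\mathrm{SL}_2(\mathcal O)$-equivariant for the conjugation-twisted action on the second factor, since $B_\ell$, being invariant under the $E_\ell$-action of every matrix in $\mathrm{SL}_2(\mathcal O)$, is in particular invariant under the action of $\bar\gamma$ for each $\gamma\in\mathrm{SL}_2(\mathcal O)$ (complex conjugation being an automorphism of $\mathcal O$, and of $R$ since it fixes the rational integer $k!$, that stabilises $\mathrm{SL}_2$). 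Recall $k+\ell$ is even, so that $\psl_2(\mathcal O)$, and not merely $\mathrm{SL}_2(\mathcal O)$, acts on $E_{k,\ell}(R)$; hence $B_k\otimes B_\ell$ is a $\psl_2(\mathcal O)$-equivariant perfect pairing (indeed symmetric, $k$ and $\ell$ having the same parity, though we do not need this).

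To construct $B_m$, identify $E_m(R)\cong\mathrm{Sym}^m(V)$ with $V=R^2$ the standard representation, and write $M^\vee=\Hom_R(M,R)$. The determinant form $\omega(v,w)=\det[\,v\mid w\,]$ on $V$ is $\mathrm{SL}_2$-invariant and perfect, i.e. it is an $\mathrm{SL}_2(\mathcal O)$-equivariant isomorphism $\phi\colon V\xrightarrow{\ \sim\ }V^\vee$; applying $\mathrm{Sym}^m$ gives an $\mathrm{SL}_2(\mathcal O)$-equivariant isomorphism $\mathrm{Sym}^m V\xrightarrow{\ \sim\ }\mathrm{Sym}^m(V^\vee)$, which we follow with the canonical $\mathrm{GL}(V)$-equivariant ``permanent'' map
\[
\nu\colon\mathrm{Sym}^m(V^\vee)\longrightarrow\bigl(\mathrm{Sym}^m V\bigr)^\vee,\qquad f_1\cdots f_m\longmapsto\Bigl(v_1\cdots v_m\mapsto{\textstyle\sum_{\sigma\in S_m}\prod_{i}f_i\bigl(v_{\sigma(i)}\bigr)}\Bigr).
\]
The composite $\mathrm{Sym}^m V\to(\mathrm{Sym}^m V)^\vee$ is $\mathrm{SL}_2(\mathcal O)$-equivariant by construction, and it is the adjoint of a bilinear pairing $B_m$ on $E_m(R)$; so $B_m$ is $\mathrm{SL}_2(\mathcal O)$-equivariant. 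A direct evaluation in the monomial basis gives $B_m(x^{m-i}y^i,x^{m-j}y^j)=\delta_{i+j,m}\,(-1)^i\,i!\,(m-i)!$, so the Gram matrix of $B_m$ is anti-diagonal with entries $\pm\,i!\,(m-i)!$, each of which divides $m!$ and is therefore a unit of $R$; hence $B_m$ is perfect over $R$. (Equivalently, up to a unit scalar $B_m$ is the classical $m$-th transvectant $(P,Q)\mapsto\bigl[\Omega^m\bigl(P(x_1,y_1)Q(x_2,y_2)\bigr)\bigr]_{x_1=x_2,\,y_1=y_2}$ with $\Omega=\partial_{x_1}\partial_{y_2}-\partial_{y_1}\partial_{x_2}$, whose $\mathrm{SL}_2$-invariance is the classical Cayley $\Omega$-process and holds over any commutative ring — a computation-free alternative proof of equivariance.)

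The one point at which the hypothesis $k!\in R^{\times}$ is indispensable is the passage from an integral pairing to a perfect one. The pairing $B_m$ above is already defined over $\mathcal O$, but it is far from perfect there: its discriminant $\pm\prod_{i=0}^{m}i!\,(m-i)!$ divides a power of $m!$, and its prime divisors are exactly the primes $\le m$; equivalently, over $\mathcal O$ the map $\nu$ is diagonal in the monomial bases with entries $i!\,(m-i)!$ dividing $m!$ (plain multiplication by $m!$ when $V$ has rank one), and becomes an isomorphism precisely after inverting $m!$. This is why one works over $R=\mathcal O[1/k!]$. The remaining verifications — bilinearity, the stability of perfectness under tensor products, and the bookkeeping for the complex-conjugation twist — are routine, the last needing only the small observation recorded in the first paragraph.
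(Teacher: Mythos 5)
Your proof is correct and follows essentially the same route as the paper: reduce to a perfect $\mathrm{SL}_2$-equivariant pairing on each symmetric-power factor induced by the determinant form on $R^2$, and take the tensor product of the two. The only difference is that the paper delegates the single-factor statement to a citation (Wiese, Lemma 2.4), whereas you prove it directly via the Gram-matrix computation $B_m(x^{m-i}y^i,x^{m-j}y^j)=\delta_{i+j,m}(-1)^i\,i!\,(m-i)!$ and also make explicit the bookkeeping for the conjugation twist, which the paper leaves implicit.
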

It is well known (see e.g. \cite{wi} Lemma 2.4) that there is a perfect pairing on $\text{Sym}^n(R^2)$ coming from the determinant pairing on $R^2$ whenever $n!$ is invertible in the ring $R$. The proposition follows by taking the product of the two pairings associated to the two factors of $E_{k,\ell}$. For an explicit description of this pairing, see Section 2.4. of \cite{berger}. As a corollary, we see that the modules $E_{k,\ell}(R)$ are self-dual.

\section{the cohomology}
In this section I will investigate the integral cohomology of Bianchi groups. My treatment is heavily influenced by work of Hida \cite{hida}, Wang \cite{wang} and Wiese \cite{wi}.
  
Let $K$ be an imaginary quadratic field. Let $\mathcal{O}$ be its ring of integers. Let $G$ be the associated Bianchi group. 
Let $\Gamma$ be a finite index subgroup of $G$.

In this paper, we will focus on the $\mathcal{O}$-modules 
$$H^i(\Gamma,E_{k,l}(\mathcal{O})), \ \ \ \ i=1,2.$$
It is well known that these are finitely generated $\mathcal{O}$-modules.

\begin{definition} Let $\pi \in \mathcal{O}$ be a prime element over the rational prime $p$. Assume that $H^i(\Gamma, E_{k,\ell}(\mathcal{O}))$ has $\pi$-torsion, i.e. it contains a non-zero class $c$ such that $\pi \cdot c = 0$. We say that $\pi$ is a {\em large torsion} if $k, \ell < p$. Otherwise, we say that $\pi$ is a {\em small torsion}.
\end{definition}

\begin{proposition} \label{main} Let $\pi$ be prime element of $\mathcal{O}$ over the rational prime $p$. Put $\kappa_{\pi}$ for its residue field. 
Let $\Gamma$ be a torsion-free finite index subgroup of the Bianchi group $G$. 
\begin{enumerate}
\item[(a)]  If $\Gamma$ surjects onto $\psl_2(\kappa_{\pi})$ and $\pi$ is unramified, then  $H^1(\Gamma, E_{k,\ell}(\mathcal{O}))$ has no large $\pi$-torsion.  
\item[(b)]  If $\Gamma$ surjects onto $\psl_2(\kappa_{\pi})$ and $\pi$ is ramified, then $H^1(\Gamma, E_{k,\ell}(\mathcal{O}))$ has no $\pi$-torsion if and only if $k=0$ and $\leq \ell \leq p-1$ or vice versa.
\item[(c)]  The obstruction to the lifting of a class in $H^1(\Gamma, E_{k,\ell}(\kappa_{\pi}))$ to $H^1(\Gamma, E_{k,\ell}(\mathcal{O}))$ \\ is the $\pi$-torsion in $H^2(\Gamma,E_{k,\ell}(\mathcal{O}))$.
\item[(d)] $H^2(\Gamma, E_{k,\ell}(\mathcal{O})) \otimes \kappa_{\pi} \simeq H^2(\Gamma, E_{k,\ell}(\kappa_{\pi}))$ for every $k,\ell$.
\end{enumerate}
\end{proposition}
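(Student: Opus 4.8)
The plan is to run the long exact cohomology sequence associated to the short exact sequence of $\mathcal{O}[\Gamma]$-modules
$$0 \to E_{k,\ell}(\mathcal{O}) \xrightarrow{\;\pi\;} E_{k,\ell}(\mathcal{O}) \to E_{k,\ell}(\kappa_\pi) \to 0,$$
where the first map is multiplication by $\pi$ (this is injective because $E_{k,\ell}(\mathcal{O})$ is a free $\mathcal{O}$-module, hence $\pi$-torsion-free). Since $\Gamma$ is torsion-free of finite index in a Bianchi group, it has virtual cohomological dimension $2$ and in fact $\mathrm{cd}(\Gamma) \le 2$; therefore $H^i(\Gamma, -) = 0$ for $i \ge 3$, and the long exact sequence terminates. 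Writing out the relevant portion, one gets
$$H^1(\Gamma, E_{k,\ell}(\mathcal{O})) \xrightarrow{\pi} H^1(\Gamma, E_{k,\ell}(\mathcal{O})) \to H^1(\Gamma, E_{k,\ell}(\kappa_\pi)) \xrightarrow{\delta} H^2(\Gamma, E_{k,\ell}(\mathcal{O})) \xrightarrow{\pi} H^2(\Gamma, E_{k,\ell}(\mathcal{O})) \to H^2(\Gamma, E_{k,\ell}(\kappa_\pi)) \to 0.$$

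For part (c): from this sequence, the cokernel of reduction mod $\pi$ on $H^1(\Gamma, E_{k,\ell}(\mathcal{O}))$ injects into $H^1(\Gamma, E_{k,\ell}(\kappa_\pi))$, and a class in $H^1(\Gamma, E_{k,\ell}(\kappa_\pi))$ lifts to $H^1(\Gamma, E_{k,\ell}(\mathcal{O}))$ precisely when it lies in the image of the reduction map, i.e. precisely when it is killed by the connecting map $\delta$. Since $\mathrm{im}(\delta) = \ker\big(\pi \colon H^2(\Gamma, E_{k,\ell}(\mathcal{O})) \to H^2(\Gamma, E_{k,\ell}(\mathcal{O}))\big)$ is exactly the $\pi$-torsion submodule of $H^2(\Gamma, E_{k,\ell}(\mathcal{O}))$, the obstruction to lifting is governed by this $\pi$-torsion, which is the assertion. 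For part (d): the tail of the sequence reads
$$H^2(\Gamma, E_{k,\ell}(\mathcal{O})) \xrightarrow{\pi} H^2(\Gamma, E_{k,\ell}(\mathcal{O})) \to H^2(\Gamma, E_{k,\ell}(\kappa_\pi)) \to 0,$$
and exactness at the right identifies $H^2(\Gamma, E_{k,\ell}(\kappa_\pi))$ with the cokernel of multiplication by $\pi$ on $H^2(\Gamma, E_{k,\ell}(\mathcal{O}))$, which is by definition $H^2(\Gamma, E_{k,\ell}(\mathcal{O})) \otimes_{\mathcal{O}} \kappa_\pi$. (Here I use that $\kappa_\pi = \mathcal{O}/\pi$ when $\pi$ is a prime element; in the ramified case one should be slightly careful that $\pi$ is the chosen uniformizer so that $\mathcal{O}/\pi = \kappa_\pi$, which is exactly the setup.) This part uses only right-exactness of tensor product, really; the content is that $H^3 = 0$ so there is no further term.

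For parts (a) and (b): the point is to control whether reduction mod $\pi$ on $H^1(\Gamma, E_{k,\ell}(\mathcal{O}))$ is surjective, equivalently whether the connecting map $\delta$ vanishes, equivalently — via (c) — whether $H^1(\Gamma, E_{k,\ell}(\mathcal{O}))$ has $\pi$-torsion at all. By the universal coefficient / exact sequence above, $H^1(\Gamma, E_{k,\ell}(\mathcal{O}))$ has no $\pi$-torsion iff reduction $H^1(\Gamma, E_{k,\ell}(\mathcal{O})) \to H^1(\Gamma, E_{k,\ell}(\kappa_\pi))$ is surjective iff $\delta = 0$. Now I would use the hypothesis that $\Gamma$ surjects onto $\psl_2(\kappa_\pi)$: this gives an inflation map $H^1(\psl_2(\kappa_\pi), E_{k,\ell}(\kappa_\pi)) \to H^1(\Gamma, E_{k,\ell}(\kappa_\pi))$, but more usefully it lets one compare $H^1(\Gamma, E_{k,\ell}(\kappa_\pi))$ with the cohomology of a finite group — and the key input is a vanishing/nonvanishing result for $H^1(\psl_2(\kappa_\pi), E_{k,\ell}(\kappa_\pi))$ together with a dévissage argument, using irreducibility of $E_{k,\ell}(\kappa_\pi)$ when $k,\ell < p$ (the Brauer–Nesbitt fact quoted above). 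In the unramified large-torsion case ($k, \ell < p$), $E_{k,\ell}(\kappa_\pi)$ is an irreducible nontrivial $\psl_2(\kappa_\pi)$-module, and a known result (of the type in Wiese's or CPS-style computations of $H^1$ of $\mathrm{SL}_2$ of a finite field with coefficients in such modules) forces the relevant obstruction classes to vanish, giving (a); in the ramified case $E_{k,\ell}(\kappa_\pi)$ decomposes with a trivial summand unless $k$ or $\ell$ is $0$, and $H^1$ with trivial $\F_p$-coefficients of a group surjecting onto $\psl_2(\F_p)$ obstructs liftability, yielding the "if and only if" of (b). I expect the main obstacle to be exactly this step: pinning down the precise $H^1(\psl_2(\kappa_\pi), E_{k,\ell}(\kappa_\pi))$ vanishing statement and propagating it from the finite quotient to $\Gamma$ (the inflation–restriction sequence only controls part of $H^1(\Gamma,-)$, so one needs an additional argument — e.g. a transfer/corestriction argument, or an explicit analysis of the $\pi$-torsion via the self-duality from Proposition \ref{pairing} and Poincaré duality pairing $H^1 \times H^1 \to H^2_c$ — to handle the complement coming from the kernel of $\Gamma \twoheadrightarrow \psl_2(\kappa_\pi)$).
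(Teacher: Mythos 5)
Your treatment of parts (c) and (d) matches the paper's argument exactly: the long exact sequence attached to multiplication by $\pi$ on $E_{k,\ell}(\mathcal{O})$, with the vanishing of $H^3(\Gamma,-)$ (from $\mathrm{cd}(\Gamma)\le 2$) closing off the sequence to give (d), and the segment around $H^1(\Gamma,E_{k,\ell}(\kappa_\pi))$ giving (c). No issues there.

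For parts (a) and (b), however, there is a genuine error followed by the gap you yourself flag. The chain of equivalences you assert --- that $H^1(\Gamma,E_{k,\ell}(\mathcal{O}))$ has no $\pi$-torsion iff reduction on $H^1$ is surjective iff $\delta=0$ --- is false: surjectivity of $H^1(\Gamma,E_{k,\ell}(\mathcal{O}))\to H^1(\Gamma,E_{k,\ell}(\kappa_\pi))$ is equivalent to the vanishing of the connecting map into $H^2$, so it controls the $\pi$-torsion of $H^2$, not of $H^1$. The $\pi$-torsion of $H^1$ is governed by the \emph{degree-zero} segment of the same long exact sequence: $H^1(\Gamma,E_{k,\ell}(\mathcal{O}))[\pi]$ is the cokernel of $H^0(\Gamma,E_{k,\ell}(\mathcal{O}))\to H^0(\Gamma,E_{k,\ell}(\kappa_\pi))$, which (the invariants of the integral module being zero here) is just $E_{k,\ell}(\kappa_\pi)^{\Gamma}$. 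This is the observation the paper uses, and it makes (a) and (b) immediate: since $\Gamma$ surjects onto $\psl_2(\kappa_\pi)$, the invariants $E_{k,\ell}(\kappa_\pi)^{\Gamma}$ vanish exactly when $E_{k,\ell}(\kappa_\pi)$ contains no trivial submodule, and this is read off from the Brauer--Nesbitt irreducibility discussion of Section 2 (irreducible and nontrivial for $0\le k,\ell\le p-1$ in the unramified case; irreducible only when $k=0\le\ell\le p-1$ or vice versa in the ramified case). Your proposed route through $H^1(\psl_2(\kappa_\pi),E_{k,\ell}(\kappa_\pi))$, inflation--restriction, transfer and duality is not only the step you admit is incomplete --- it is aimed at the wrong cohomological degree. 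No vanishing theorem for $H^1$ of the finite group is needed; only the computation of the $H^0$ term.
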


\begin{proof}   
In the following, let us put $E=E_{k,\ell}$.
Consider the following short exact sequence 
$$ 0 \rightarrow E(\mathcal{O}) \xrightarrow{\cdot \pi} E(\mathcal{O}) \rightarrow E(\kappa_{\pi}) \rightarrow 0 .$$
where $\cdot \pi$ is the multiplication by $\pi$ map.

The associated long exact sequence gives the following short exact sequence
$$ 0 \rightarrow H^i(\Gamma, E(\mathcal{O})) \otimes \kappa_{\pi} \rightarrow H^i(\Gamma, E(\kappa_{\pi})) \rightarrow H^{i+1}(\Gamma, E(\mathcal{O}))[\pi] \rightarrow 0$$
for $i \geq 0$. Here $H^j(\Gamma, E(\mathcal{O}))[\pi]$ denotes the kernel of the map induced by $\cdot \pi$. \\
Putting $i=0$, we get
$$E(\kappa_{\pi})^{\Gamma} \simeq H^1(\Gamma, E(\mathcal{O}))[\pi]$$
Now (a) and (b) follow via the irreducibility discussions of the previous section.
For $i=1$ we get
$$0 \rightarrow H^1(\Gamma, E(\mathcal{O})) \otimes \kappa_{\pi} \rightarrow H^1(\Gamma, E(\kappa_{\pi})) \rightarrow H^2(\Gamma, E(\mathcal{O}))[\pi] \rightarrow 0$$
which explains the claim (c).
It is known that the virtual cohomological dimension of a Bianchi group is 2. Setting $i=2$, we get
$$H^2(\Gamma, E(\mathcal{O})) \otimes \kappa_{\pi} \simeq H^2(\Gamma, E(\kappa_{\pi})),$$
finishing the proof.
\end{proof}

Each cohomology space comes equipped with a commuting family $\mathbb{T}$ of Hecke operators acting on it, see \cite{st}. A \textit{(Hecke) eigenvalue system} with values in a ring $R$ is a ring homomorphism $\Phi : \mathbb{T} \rightarrow R$. We say that an eigenvalue system $\Phi$ occurs in an $R\mathbb{T}$-module $A$ if there is a nonzero element $a \in A$ such that $Ta=\Phi(T)a$ for all $T$ in $\mathbb{T}$. Using a lifting theorem of Ash and Stevens (\cite{as}, Prop.1.2.2), we can lift an eigenvalue systems occurring in $H^2(\Gamma, E(\kappa_{\pi}))$ to one occurring in $H^2(\Gamma, E(\mathcal{R}))$ where $\mathcal{R}$ is some finite extension of the completion of $\mathcal{O}$ at $\pi$. The possible $p$-torsion in $H^2(\Gamma, E(\mathcal{O}))$ obstructs us from applying the lifting theorem to lift eigenvalue systems occurring in $H^1(\Gamma, E(\kappa_{\pi}))$.

\subsection{Cuspidal cohomology}
There is a subspace of the cohomology that is of special interest due to the fact that it can be 
identified with cuspidal Bianchi modular forms. 

Let $K=\Q(\sqrt{-d})$ be an imaginary quadratic field of class number $h_K$ with ring of integers $\mathcal{O}=\mathcal{O}_d$. Let $\mathbb{P}$ denote the projective line over $K$ and 
$G=\psl_2(\mathcal{O})$. The group $\psl_2(K)$ acts naturally on $K^2$ and thus on $\P$. It is well-known that the cardinality $\abs{\P / G}$ of the set $\P / G$ of $G$-orbits of $\P$ is equal to $h_K$. Hence $\abs{\P / \Gamma}$ is finite for any finite index subgroup $\Gamma$ of $G$. We will call the elements of $\P / \Gamma$ the {\it cusps} of $\Gamma$.

For every $D \in \P$, let $B_D$ be the Borel subgroup of $G$ defined by the (setwise) stabilizer of $D$ in $G$. Then the pointwise stabilizer of $D$ in $G$ is the unipotent radical $U_D$ of the Borel subgroup $B_D$. Let $\Gamma$ be a finite index subgroup of $G$ and $D_c$ be a representative for a cusp $c$ of $\Gamma$. Define 
$$\Gamma_c := B_{D_c} \cap \Gamma.$$
If $\Gamma_c$ is torsion-free (this is automatic if $\Gamma$ is itself torsion-free or $-d \not = 1,3$), then  $\Gamma_c = U_{D_c} \cap \Gamma$ and $\Gamma_c$ is free abelian of rank two (see \cite{serre70} p.507). The group 
$$ U(\Gamma) := \bigoplus_{c \in \P / \Gamma} \Gamma_c$$
is independent of the choice of representatives taken for the cusps of $\Gamma.$

Let $E$ be a $\Gamma$-module. Consider the long exact sequence of relative group cohomology for the pair $(\Gamma, U(\Gamma))$
$$ \hdots \rightarrow H^{i-1}_c(\Gamma,E) \rightarrow H^i(\Gamma,E) \rightarrow H^i(U(\Gamma),E) \rightarrow \hdots $$
where $H^n_c(\Gamma,E):=H^n(\Gamma;U(\Gamma),E)$ and the third arrow is given by the restriction maps.

\begin{definition} The {\it cuspidal cohomology} $H^i_{cusp}(\Gamma,E)$
is defined as the image of the cohomology with compact support in $H^i(\Gamma,E)$, or equivalently as the kernel of the restriction map 
$H^i(\Gamma,E) \rightarrow H^i(U(\Gamma),E)$.   
\end{definition}

\begin{remark}\label{esh} Let $S_{k,\ell}(\Gamma)$ denote the space of cuspidal Bianchi modular forms with level $\Gamma$ and weight $(k,\ell)$. Harder proved in \cite{harder} the so called Eichler-Shimura-Harder isomorphism 
$$S_{k,\ell}(\Gamma) \simeq H^1_{cusp}(\Gamma,E_{k,\ell}(\C)) \simeq H^2_{cusp}(\Gamma,E_{k,\ell}(\C))$$
of Hecke modules. Note that the second isomorphism is an instance of a duality result which says that 
if $F$ is a field in which 6 is invertible, then  
$$H^1_{cusp}(\Gamma, E(F))^{\vee} \simeq H^2_{cusp}(\Gamma, E(F)^{\vee })$$
as Hecke modules where $-^{\vee }$ denotes the dual, see \cite{as} Lemma 1.4.3.

Deep results of Borel and Wallach (see Section II of their book \cite{bw}) imply that whenever $k \not= \ell$, the cuspidal 
cohomology $H^i_{cusp}(\Gamma,E_{k,\ell}(\C))$ vanishes. It is important to remark that this is not true anymore when the module 
$E_{k,\ell}$ is not over a field of characteristic 0. In particular, $H^i_{cusp}(\Gamma,E_{k,\ell}(\mathcal{O}))$ is completely torsion 
when $k \not= \ell$. 
\end{remark}

\begin{proposition} \label{infinitytorsion}
Let $\Gamma$ be a finite index subgroup of the Bianchi group $\psl_2(\mathcal{O})$. Assume either that $\Gamma$ is torsion-free or that $-d \not = 1,3$. Then
$H^2(U(\Gamma), E_{k,\ell}(\mathcal{O}))$ has no large torsion.
\end{proposition}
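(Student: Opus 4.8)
The plan is to reduce the computation of $H^2(U(\Gamma), E_{k,\ell}(\mathcal{O}))$ to that of the cohomology of a single cusp subgroup $\Gamma_c$, which under our hypotheses is free abelian of rank two, and then to compute the top cohomology of $\Z^2$ with coefficients in $E_{k,\ell}(\mathcal{O})$ explicitly. Since $U(\Gamma) = \bigoplus_{c} \Gamma_c$ is a finite direct sum, we have $H^2(U(\Gamma), E_{k,\ell}(\mathcal{O})) \simeq \bigoplus_c H^2(\Gamma_c, E_{k,\ell}(\mathcal{O}))$, so it suffices to show that each $H^2(\Gamma_c, E_{k,\ell}(\mathcal{O}))$ has no large torsion. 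Fix a cusp $c$ and write $A = \Gamma_c \cong \Z^2$. Choosing generators, $A$ sits inside $G$ as a group of unipotent upper-triangular matrices $\left(\begin{smallmatrix} 1 & b \\ 0 & 1 \end{smallmatrix}\right)$, after conjugating $D_c$ to the standard cusp $[1:0]$; the image is a finite-index additive subgroup of $\mathcal{O}$ (or more precisely a lattice commensurable with one, depending on $\Gamma$).

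Next I would use the standard fact that for $A \cong \Z^2$ and any $A$-module $M$, $H^2(A, M) \simeq M_A = M/(\text{augmentation ideal})M$, the module of coinvariants; equivalently $H^2(\Z^2, M) \cong H_0(\Z^2, M)$ by Poincaré duality for the torus, since $\Z^2$ is a $\mathrm{PD}_2$-group. So the task becomes: show that the coinvariants $E_{k,\ell}(\mathcal{O})_A$ have no large torsion, i.e. no $\pi$-torsion for primes $\pi \mid p$ with $k,\ell < p$. Writing $E_{k,\ell}(\mathcal{O}) = E_k(\mathcal{O}) \otimes_{\mathcal{O}} \overline{E_\ell(\mathcal{O})}$ and using that the unipotent $\left(\begin{smallmatrix} 1 & b \\ 0 & 1 \end{smallmatrix}\right)$ acts on $E_k(\mathcal{O}) = \mathcal{O}[x,y]_k$ by $x \mapsto x$, $y \mapsto bx + y$, one can write down the action of a generator $u$ of $A$ on the monomial basis and compute the cokernel of $u - \id$ (and a second generator) directly. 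The key point is that over $\mathcal{O}[\tfrac{1}{p}]$, or more relevantly after tensoring with $\kappa_\pi$, the unipotent action on $E_k(\kappa_\pi)$ with $k < p$ has coinvariants that are a line (spanned by the image of $y^k$, say), with the torsion in the integral coinvariants controlled by the elementary divisors of $u - \id$, which involve only factorials $\leq k!$ and hence only primes $< p$; so no large $\pi$-torsion appears.

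The main obstacle I expect is the bookkeeping for the second generator of $A \cong \Z^2$: $H_0(\Z^2, M)$ is the cokernel of the map $M \oplus M \xrightarrow{(u_1 - \id,\ u_2 - \id)} M$, and one must check that simultaneously killing the two unipotent actions does not introduce $p$-torsion for $p > k, \ell$. This should follow because the two unipotents $\left(\begin{smallmatrix} 1 & b_1 \\ 0 & 1 \end{smallmatrix}\right)$ and $\left(\begin{smallmatrix} 1 & b_2 \\ 0 & 1 \end{smallmatrix}\right)$ generate a rank-two lattice of translations, and over a field of residue characteristic $p > k$ the combined coinvariants are again just a line, with torsion governed by a gcd of the relevant binomial-coefficient matrices whose invariant factors involve no prime exceeding $\max(k,\ell)$; alternatively one can invoke the self-duality pairing of Proposition \ref{pairing} over $\mathcal{O}[\tfrac{1}{k!}]$ to pass between $H_0$ and $H^0$ and reduce to the already-understood invariants $E_{k,\ell}(\kappa_\pi)^A$. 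Once the field computation is in hand, the no-large-torsion statement for the integral coinvariants is immediate from the universal-coefficient sequence (exactly as in the proof of Proposition \ref{main}), completing the argument.
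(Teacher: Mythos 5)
Your reduction to a single cusp and the identification $H^2(\Gamma_c,M)\simeq M/(M(1-u_1)+M(1-u_2))$ via the tensor-product resolution are exactly where the paper starts. After that you diverge, and the route you choose has a genuine gap: the claim that the invariant factors of the map $(m_1,m_2)\mapsto m_1(u_1-1)+m_2(u_2-1)$ ``involve only factorials $\leq k!$ and hence only primes $\leq\max(k,\ell)$'' is false. Since $(X^iY^{k-i})(u_b-1)=\sum_{m\geq1}\binom{i}{m}b^mX^{i-m}Y^{k-i+m}$, the matrix entries are binomial coefficients times monomials in $b$ and $\bar b$, where $b$ runs over generators $b_1,b_2$ of the translation lattice $L\subseteq\mathcal{O}$ underlying $\Gamma_c$; the lattice contributes to the elementary divisors, not just the binomials. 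Carry out your own computation for $k=\ell=1$: in the basis $X\bar X, X\bar Y, Y\bar X, Y\bar Y$ of $E_{1,1}(\mathcal{O})$ the torsion of the coinvariants contains $\mathcal{O}^2/\langle(\bar b_1,b_1),(\bar b_2,b_2)\rangle$, whose order ideal is $(\bar b_1b_2-b_1\bar b_2)$. For $L=\mathcal{O}$ this is $(\omega-\bar\omega)=(\sqrt{\Delta_K})$, giving torsion at the ramified prime; for the cusp $0$ of $\Gamma_0(\mathfrak{p})$ the lattice is $\mathfrak{p}$ and the ideal is divisible by $\mathbf{N}\mathfrak{p}$, giving torsion at a prime far larger than $\max(k,\ell)=1$. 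So the direct route, done honestly, does not yield ``no large torsion''; it shows that the conclusion requires excluding ramified primes and primes at which the cusp lattice degenerates modulo $\pi$ (consistent, e.g., with the tables of Section 5, where the ramified prime always occurs but is never flagged as large).

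Your fallback is in substance the paper's own argument: compose the cup product with the pairing of Proposition \ref{pairing} to get $H^0(\Gamma_c,E(R))\times H^2(\Gamma_c,E(R))\to H^2(\Gamma_c,R)\simeq R$ and conclude $H^2\simeq (H^0)^{\vee}$, hence torsion-freeness. But this pairing takes values in a torsion-free module, so the torsion of $H^2(\Gamma_c,E(R))=E(R)_{\Gamma_c}$ automatically lies in its right kernel; equivalently, the natural map $(E^{\vee})_{A}\to(E^{A})^{\vee}$ is an isomorphism only modulo torsion (the torsion of $H^2$ is linked to that of $H^1$, not paired against $H^0$). So neither your fallback nor the paper's duality step can rule out torsion in the coinvariants, and the explicit example above shows some of it is genuinely there. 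Finally, your closing appeal to universal coefficients is not ``immediate'' either: that sequence presents $H^2(\Gamma_c,E(\mathcal{O}))[\pi]$ as the cokernel of $H^1(\Gamma_c,E(\mathcal{O}))\otimes\kappa_{\pi}\to H^1(\Gamma_c,E(\kappa_{\pi}))$, so computing the mod-$\pi$ coinvariants settles nothing until you also show that $\dim_{\kappa_{\pi}}E(\kappa_{\pi})_{\Gamma_c}$ equals the generic rank of $E(\mathcal{O})_{\Gamma_c}$ --- and that equality is exactly what fails at the problematic primes, since there the reduction of $L$ collapses and the mod-$\pi$ coinvariants jump in dimension.
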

\begin{proof} It is enough to prove the claim for a single cusp $c$ of $\Gamma$, that is, for $H^2(\Gamma_c,E)$ . So fix a cusp $c$ and $\Gamma_c$.  
Let $E=E_{k,\ell}$ and $t=\text{max}\{k,\ell \}$. Put $R=\mathcal{O}[\frac{1}{t!}]$. Composition of the cup product and the perfect pairing of Proposition \ref{pairing} gives us a pairing 

$$\xymatrix{ H^0(\Gamma_c, E(R)) \times H^2(\Gamma_c, E(R)) \ar[r]^{\ \ \ \ \ \cup} & 
             H^2(\Gamma_c,E(R) \otimes_{R} E(R)) \ar[d]^{(\cdot, \cdot)}  \\ 
            &  H^2(\Gamma_c, R ) \simeq R.}$$
That $H^2(\Gamma_c, R ) \simeq R$ can be shown as follows. Recall that $\Gamma_c$ is free abelian with two generators, say $a,u$. 
It is known, see \cite{maclane} p.188, that the tensor product of the two resolutions 
$$\xymatrix{ 0 \ar[r] & R[\langle a \rangle ] \ar[r]^{1-a}& R[\langle a \rangle] \ar[r]^{\varepsilon} & R \ar[r] & 0, \\
0 \ar[r] & R[\langle u \rangle ] \ar[r]^{1-u}& R[\langle u \rangle] \ar[r]^{\varepsilon} & R \ar[r] & 0, 
}$$
where $\varepsilon$ is the usual augmentation map, gives a resolution of $\Gamma_c$. One sees from this resolution that the second cohomology of $\Gamma_c$ with any (right) $R$-module $M$ can be described as 
$$H^2(\Gamma_c,M) \simeq M / \left ( M(1-a)+ M(1-u) \right ) .$$
In the case of trivial module $R$, it follows immediately that $H^2(\Gamma_c, R ) \simeq R$.
          
The above pairing gives that $H^2(\Gamma_c, E(R)) \simeq H^0(\Gamma_c,E(R))^{\vee }$. Clearly $H^0(\Gamma_c, E(R)) \simeq E(R)^{\Gamma_c}$ 
is torsion-free. This implies that its dual and hence $H^2(\Gamma_c, E(R))$ is torsion-free. The claim that there can only be small torsion in $H^2(\Gamma_c, E(\mathcal{O}))$ now follows as $R=\mathcal{O}[\frac{1}{t!}].$ 
\end{proof}

As a corollary we see that the cuspidal part of $H^2$ is responsible for the possible large torsion. 
The referee brought to my attention that the analogue of the above result for $H^1(U(\Gamma),E_{k,k}(\mathcal{O}))$ was proven by Urban in \cite{urban} Prop.$2.4.1$. 
This resut of Urban similarly implies that the possible large torsion in $H^2_c(\Gamma,E_{k,k}(\mathcal{O}))$ comes from  $H^2_{cusp}(\Gamma,E_{k,k}(\mathcal{O}))$ as well.

\begin{proposition} \label{h2cuspidal} Let $G$ be the Bianchi group $\psl_2(\mathcal{O})$. 
Let $\pi$ be prime element of $\mathcal{O}$ over the rational prime $p$ and put $\kappa_{\pi}$ for its residue field. 
Let $\Gamma$ be a torsion-free finite index subgroup of $G$. Then 
$$H^2_{cusp}(\Gamma, E_{k,\ell}(\mathcal{O})) \otimes \kappa_{\pi} \simeq H^2_{cusp}(\Gamma, E_{k,\ell}(\kappa_{\pi}))$$
for every $k,\ell < p$.
\end{proposition}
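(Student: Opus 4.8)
The plan is to run the same multiplication-by-$\pi$ short exact sequence that was used in Proposition \ref{main}, but now applied to the cuspidal subspaces, and to combine it with the long exact sequence of the pair $(\Gamma, U(\Gamma))$. The key point to exploit is that, under the hypothesis $k,\ell<p$, Proposition \ref{infinitytorsion} guarantees $H^2(U(\Gamma),E(\mathcal{O}))$ has no large torsion, i.e. no $\pi$-torsion, and likewise the boundary cohomology in degree $1$ is controlled. So I would first recall that from part (d) of Proposition \ref{main} we already have $H^2(\Gamma,E(\mathcal{O}))\otimes\kappa_\pi \simeq H^2(\Gamma,E(\kappa_\pi))$, and from the $i=1$ instance of the same short exact sequence we get
$$0 \to H^1(\Gamma,E(\mathcal{O}))\otimes\kappa_\pi \to H^1(\Gamma,E(\kappa_\pi)) \to H^2(\Gamma,E(\mathcal{O}))[\pi] \to 0.$$
The task is to intersect these with the cuspidal parts, which by definition are the kernels of the restriction maps to $H^\bullet(U(\Gamma),-)$.

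Concretely, I would set up a commutative ladder: the rows are the multiplication-by-$\pi$ long exact sequences for $E$ on $\Gamma$ and on $U(\Gamma)$, and the vertical maps are restriction. This gives a morphism of long exact sequences, hence a long exact sequence of the relative groups, or equivalently one can work directly with $H^\bullet_c(\Gamma,E)$. Since $H^\bullet_{cusp}$ is the image of $H^\bullet_c \to H^\bullet$, I would chase the diagram to show that tensoring the short exact sequence
$$0 \to H^2_c(\Gamma,E(\mathcal{O})) \to H^2_c(\Gamma,E(\mathcal{O})) \to \cdots$$
with $\kappa_\pi$ behaves well. The cleanest route: because $\mathrm{vcd}=2$, we have $H^3(\Gamma,E(\mathcal{O}))=0$, so $H^2(\Gamma,E(\mathcal{O}))\otimes\kappa_\pi \simeq H^2(\Gamma,E(\kappa_\pi))$ already. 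Now observe that $H^2_{cusp}(\Gamma,E(\mathcal{O}))$ is a direct summand-free but well-behaved submodule; the essential input is that $H^2(U(\Gamma),E(\mathcal{O}))[\pi]=0$ (large-torsion-free, by Proposition \ref{infinitytorsion}, using $k,\ell<p$) together with the fact that $H^1(U(\Gamma),E(\kappa_\pi)) \to H^2(U(\Gamma),E(\mathcal{O}))[\pi]$ is then forced to be surjective with the expected kernel. From the snake/ladder between the two multiplication-by-$\pi$ sequences one extracts that $\ker\big(H^2(\Gamma,E(\kappa_\pi)) \to H^2(U(\Gamma),E(\kappa_\pi))\big)$ is exactly the image of $H^2_{cusp}(\Gamma,E(\mathcal{O}))\otimes\kappa_\pi$ under the isomorphism of Proposition \ref{main}(d). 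In other words, the isomorphism $H^2(\Gamma,E(\mathcal{O}))\otimes\kappa_\pi \xrightarrow{\sim} H^2(\Gamma,E(\kappa_\pi))$ carries $H^2_{cusp}(\Gamma,E(\mathcal{O}))\otimes\kappa_\pi$ onto $H^2_{cusp}(\Gamma,E(\kappa_\pi))$, which is the claim.

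The main obstacle I anticipate is the compatibility of the tensor product $-\otimes\kappa_\pi$ with the passage to the cuspidal (image) subspace: tensoring is only right exact, so a priori $H^2_{cusp}(\Gamma,E(\mathcal{O}))\otimes\kappa_\pi$ could fail to inject into $H^2(\Gamma,E(\kappa_\pi))$, and one must rule out a spurious $\tor_1^{\mathcal{O}}$ contribution. This is precisely where $k,\ell<p$ enters: it forces, via Proposition \ref{infinitytorsion}, the relevant $\pi$-torsion in $H^2(U(\Gamma),E(\mathcal{O}))$ to vanish, so the restriction map on $\pi$-torsion in $H^2$ is the zero map into $0$, and the cuspidal condition in characteristic $0$-style behaviour is inherited mod $\pi$. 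I would therefore spend the bulk of the argument verifying that $H^2_{cusp}(\Gamma,E(\mathcal{O}))$ has no $\pi$-torsion beyond what the full group's $\tor_1$ already records — equivalently, that $\tor_1^{\mathcal{O}}\big(H^2(U(\Gamma),E(\mathcal{O})),\kappa_\pi\big)$ contributes nothing to the comparison — and then the diagram chase closes formally. The inert/split/ramified casework of Section 2 does not reappear here beyond what is already absorbed into Proposition \ref{infinitytorsion}.
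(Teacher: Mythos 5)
Your approach is the paper's: the same commutative ladder between the multiplication-by-$\pi$ sequences for $\Gamma$ and for $U(\Gamma)$, with Proposition \ref{main}(d) giving exactness of the top row, Proposition \ref{infinitytorsion} giving injectivity of $\cdot \pi$ on $H^2(U(\Gamma),E(\mathcal{O}))$ (this is exactly where $k,\ell<p$ enters, as you say), and the snake lemma identifying the kernels of the vertical restriction maps with the cuspidal subspaces. Your $\tor_1$ discussion correctly settles the injectivity half of the claimed isomorphism: since $H^2(U(\Gamma),E(\mathcal{O}))$ has no $\pi$-torsion, the quotient $H^2(\Gamma,E(\mathcal{O}))/H^2_{cusp}(\Gamma,E(\mathcal{O}))$, which embeds in it, is $\pi$-torsion-free, so $H^2_{cusp}(\Gamma,E(\mathcal{O}))\otimes\kappa_{\pi}$ injects into $H^2(\Gamma,E(\mathcal{O}))\otimes\kappa_{\pi}\simeq H^2(\Gamma,E(\kappa_{\pi}))$.

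The surjectivity half, however, does not ``close formally'' from what you have written. In the snake sequence $\ker_1\to\ker_2\to\ker_3\to\coker_1\to\cdots$, surjectivity of $H^2_{cusp}(\Gamma,E(\mathcal{O}))\otimes\kappa_{\pi}\to H^2_{cusp}(\Gamma,E(\kappa_{\pi}))$ requires the vanishing of $\coker_1=\coker\bigl(H^2(\Gamma,E(\mathcal{O}))\to H^2(U(\Gamma),E(\mathcal{O}))\bigr)$. Concretely: a cuspidal class $c$ over $\kappa_{\pi}$ lifts by \ref{main}(d) to some $b\in H^2(\Gamma,E(\mathcal{O}))$ whose restriction lies in $\pi\, H^2(U(\Gamma),E(\mathcal{O}))$, say equal to $\pi u$; to replace $b$ by a cuspidal lift $b-\pi a$ you must realize $u$ as the restriction of some $a\in H^2(\Gamma,E(\mathcal{O}))$. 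The paper supplies this by noting that the cokernel of the degree-two restriction map is $H^3_{cusp}(\Gamma,E)\subset H^3(\Gamma,E)=0$, using vcd $=2$ together with the torsion-freeness of $\Gamma$. You invoke $H^3(\Gamma,E(\mathcal{O}))=0$ only to justify Proposition \ref{main}(d); you need it a second time, for the surjectivity of the restriction map itself. With that one addition your argument is complete and coincides with the paper's.
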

\begin{proof} Put $E=E_{k,\ell}(\mathcal{O})$. Now consider the commutative diagram 

$$\xymatrix{ & H^2(\Gamma,E) \ar[d] \ar[r]^{\cdot \pi} & H^2(\Gamma,E) \ar[d] \ar[r] & H^2(\Gamma,E(\kappa_{\pi})) \ar[d] \ar[r] & 0 \\ 
0 \ar[r] & H^2(U(\Gamma),E) \ar[r]^{\cdot \pi} & 
           H^2(U(\Gamma),E) \ar[r] & 
           H^2(U(\Gamma),E(\kappa_{\pi})) }$$
Here the vertical maps are given by the usual restriction maps.

The horizontal lines are exact. The exactness of the first line comes from Proposition \ref{main} part (d). The exactness 
of the second line amounts to Proposition \ref{infinitytorsion}.

Observe that the cokernel of the restriction map $H^2(\Gamma,E) \rightarrow H^2(U(\Gamma),E)$ is isomorphic to $H^3_{cusp}(\Gamma,E)  
\subset H^3(\Gamma,E)$. Since the virtual cohomological dimension of a Bianchi group is two and $\Gamma$ is torsion-free, we have 
$H^3(\Gamma,E)=0$.  Now the claim follows by the Snake Lemma. 
\end{proof}

Let us end this section with the following observation on lifting eigenvalue systems.
 \begin{proposition} \label{liftsolution} Let $\pi \in \mathcal{O}$ be a prime element over the rational prime $p>3$.
Let $\Gamma$ be a torsion-free finite index subgroup of $\psl_2(\mathcal{O})$. Let $\Phi$ be an eigenvalue system 
occuring in $H^1_{cusp}(\Gamma, E_{k,\ell}(\kappa_{\pi}))$ with $k,\ell <p$ and $\kappa_{\pi}=\mathcal{O}/(\pi)$. If $\Phi$ does not lift to 
$H^1_{cusp}(\Gamma, E_{k,\ell}(\mathcal{R}))$ for any finite extension $\mathcal{R}$ of the completion 
$\mathcal{O}_{\pi}$ of $\mathcal{O}$ at $\pi$, then there is a $\pi$-torsion eigenclass $c \in H^2_{cusp}(\Gamma,E_{k,\ell}(\mathcal{O}))$ realizing a lift of $\Phi$.
 \end{proposition}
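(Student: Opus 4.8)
The plan is to reduce the statement to a dichotomy governed by a Hecke-equivariant ``Bockstein'' short exact sequence for cuspidal cohomology in degree one, in the spirit of the proofs of Proposition~\ref{main} and Proposition~\ref{h2cuspidal}; throughout set $E=E_{k,\ell}(\mathcal{O})$. The first, and main, step I would carry out is to produce, for $k,\ell<p$, a $\mathbb{T}$-equivariant short exact sequence
$$0 \longrightarrow H^1_{cusp}(\Gamma,E)\otimes\kappa_{\pi} \;\xrightarrow{\ \alpha\ }\; H^1_{cusp}(\Gamma,E(\kappa_{\pi})) \;\xrightarrow{\ \beta\ }\; H^2_{cusp}(\Gamma,E)[\pi] \longrightarrow 0 ,$$
obtained by running the degree-one segment of the long exact sequence attached to $0\to E\xrightarrow{\cdot\pi}E\to E(\kappa_{\pi})\to 0$ alongside the corresponding segment for $U(\Gamma)$, with the restriction maps as the vertical arrows, exactly as in the proof of Proposition~\ref{h2cuspidal}. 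The inputs are: $\Gamma$ is torsion-free of virtual cohomological dimension two, so $H^3(\Gamma,-)=0$ and the ladder terminates; by Proposition~\ref{infinitytorsion} the boundary cohomology $H^2(U(\Gamma),E)$ has no large torsion, so $H^2(U(\Gamma),E)[\pi]=0$, which forces $H^2(\Gamma,E)[\pi]\subseteq H^2_{cusp}(\Gamma,E)$ and makes $H^1(U(\Gamma),E)\otimes\kappa_{\pi}\xrightarrow{\ \sim\ }H^1(U(\Gamma),E(\kappa_{\pi}))$; reduction modulo $\pi$ sends cuspidal classes to cuspidal classes by naturality of restriction, so that $\alpha$ is the (injective) reduction map and $\beta$ the Bockstein connecting map, both $\mathbb{T}$-equivariant; and a snake-lemma chase then identifies $\ker\beta$ with the full image of the integral cuspidal cohomology, once one also knows that the boundary cohomology has no large $\pi$-torsion in degree one as well (Urban's result for $k=\ell$, cf.\ the remark after Proposition~\ref{infinitytorsion}), so that no cuspidality is lost under reduction.

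Granting the sequence, the conclusion is immediate. Suppose $\Phi$ does not lift. Then $\Phi$ cannot occur in $H^1_{cusp}(\Gamma,E)\otimes\kappa_{\pi}\cong H^1_{cusp}(\Gamma,E(\mathcal{O}))/\pi$: otherwise the lifting theorem of Ash--Stevens (\cite{as}, Prop.~1.2.2), applied to the finitely generated $\mathcal{O}$-module $H^1_{cusp}(\Gamma,E(\mathcal{O}))$, would furnish a finite extension $\mathcal{R}$ of $\mathcal{O}_{\pi}$ and a system lifting $\Phi$ that occurs in $H^1_{cusp}(\Gamma,E(\mathcal{O}))\otimes_{\mathcal{O}}\mathcal{R}=H^1_{cusp}(\Gamma,E(\mathcal{R}))$ (the last identification by flat base change, $\mathcal{R}$ being flat over $\mathcal{O}$), contrary to hypothesis. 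Hence $\im\alpha$ is a $\mathbb{T}$-stable submodule of $H^1_{cusp}(\Gamma,E(\kappa_{\pi}))$ in which $\Phi$ does not occur, while $\Phi$ does occur in the ambient module; therefore any eigenvector $v\in H^1_{cusp}(\Gamma,E(\kappa_{\pi}))$ realizing $\Phi$ lies outside $\im\alpha=\ker\beta$. Consequently $c:=\beta(v)$ is a nonzero class in $H^2_{cusp}(\Gamma,E(\mathcal{O}))$, annihilated by $\pi$, and $\mathbb{T}$-equivariance of $\beta$ gives $Tc=\beta(Tv)=\Phi(T)\,c$ for all $T\in\mathbb{T}$. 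Thus $c$ is the required $\pi$-torsion eigenclass. (The hypothesis $p>3$ ensures that $6$, and a fortiori $\max\{k,\ell\}!$ for $k,\ell<p$, is invertible in $\kappa_{\pi}$; this underlies both the large-torsion-freeness of the boundary cohomology and the self-duality of $E$ modulo $\pi$ invoked in Proposition~\ref{pairing}, Proposition~\ref{infinitytorsion} and Remark~\ref{esh}.)

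The hard part, I expect, is the first step: producing the degree-one cuspidal Bockstein sequence $\mathbb{T}$-equivariantly, and in particular verifying that $\ker\beta$ is exactly the reduction of the integral cuspidal cohomology and nothing larger. In degree two this is unproblematic because there is no $H^3$; in degree one the subtlety is that the $H^1$ of the cusp subgroups is genuinely more delicate than its $H^2$, so that cuspidality is not visibly preserved under reduction modulo $\pi$, and this has to be controlled by the no-large-torsion statements for the boundary cohomology. A secondary, bookkeeping point is to confirm that the Ash--Stevens lifting theorem applies to the possibly-torsion module $H^1_{cusp}(\Gamma,E(\mathcal{O}))$ --- entirely torsion when $k\neq\ell$ by Remark~\ref{esh} --- so that the lift it produces, although perhaps itself torsion, still occurs in $H^1_{cusp}(\Gamma,E(\mathcal{R}))$ for a finite extension ring $\mathcal{R}$ of $\mathcal{O}_{\pi}$ and hence contradicts the hypothesis.
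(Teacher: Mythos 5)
Your overall strategy is genuinely different from the paper's: you try to produce a degree-one ``cuspidal Bockstein'' sequence and read off the torsion eigenclass as $\beta(v)$, whereas the paper dualizes into degree two (using $p>3$ and self-duality from Proposition \ref{pairing} and Remark \ref{esh}), lifts there via Proposition \ref{h2cuspidal} together with Ash--Stevens, and then derives a contradiction from integrality if the lifted system were non-torsion. The second half of your argument (the dichotomy, the $\mathbb{T}$-equivariance of $\beta$, the flat base change to $\mathcal{R}$) is fine \emph{granting} the exact sequence. But the exact sequence itself --- the step you yourself flag as the hard part --- is not established, and it is precisely where the argument can break.

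Concretely, the problem is exactness at the middle term: you need $\ker\bigl(\beta|_{H^1_{cusp}(\Gamma,E(\kappa_{\pi}))}\bigr)$ to equal the image of $H^1_{cusp}(\Gamma,E)\otimes\kappa_{\pi}$. Running the snake lemma on the two rows $0\to H^*(-,E)\otimes\kappa_{\pi}\to H^*(-,E(\kappa_{\pi}))\to H^{*+1}(-,E)[\pi]\to 0$ for $\Gamma$ and $U(\Gamma)$, the kernel of the left vertical map is $\ker\bigl(H^1(\Gamma,E)\otimes\kappa_{\pi}\to H^1(U(\Gamma),E)\otimes\kappa_{\pi}\bigr)$, and this contains the reduction of $H^1_{cusp}(\Gamma,E)$ \emph{plus} a contribution measured by the $\pi$-torsion of $\coker\bigl(H^1(\Gamma,E)\to H^1(U(\Gamma),E)\bigr)$ (a nonzero subgroup of $H^2_c(\Gamma,E)$ in general): an integral class whose boundary restriction is divisible by $\pi$ need not differ by $\pi\cdot(\text{something})$ from a cuspidal class, because restriction in degree one is not surjective. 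So an eigenvector $v$ realizing $\Phi$ could lie in $\ker\beta$ without $\Phi$ occurring in $H^1_{cusp}(\Gamma,E)\otimes\kappa_{\pi}$, and your dichotomy collapses. Note also that the no-large-torsion input you invoke for the degree-one boundary cohomology (Urban's result) is only available for $k=\ell$, while the proposition allows all $k,\ell<p$; and even granting it, it controls $\tor$-terms of $H^1(U(\Gamma),E)$, not the cokernel above. This is why the paper avoids degree one altogether: in degree two the analogous identification is unproblematic because $H^3(\Gamma,E)=0$ kills the error term (Proposition \ref{h2cuspidal}), and duality transports the problem there. To repair your argument you would need to prove the degree-one statement, or follow the paper's detour through $H^2_{cusp}$.
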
 
\begin{proof} 
As $p>3$, by the duality result I mentioned in Remark \ref{esh}, we deduce that $\Phi^{\vee}$ lives in $H^2_{cusp}(\Gamma, E_{k,\ell}(\kappa_{\pi}))$. Note that our coefficient modules are self-dual. Since $k,\ell < p$, using Proposition \ref{h2cuspidal} and the lifting theorem of Ash and Stevens mentioned after Prop.\ref{main}, we infer that there is an eigenvalue system $\Psi$ living in $H^2_{cusp}(\Gamma,E_{k,\ell}(\mathcal{R}))$ lifting $\Phi^{\vee}$ for some finite extension $\mathcal{R}$ of $\mathcal{O}_{\pi}$. 
If $\Psi$ is not realized by a torsion eigenclass $c \in H^2_{cusp}(\Gamma,E_{k,\ell}(\mathcal{R}))$, 
then we can realize $\Psi$ in  $H^2_{cusp}(\Gamma,E_{k,\ell}(L))$ where $L$ is the field of fractions of $\mathcal{R}$. By duality again, $\Psi^{\vee}$ occurs in $H^1_{cusp}(\Gamma,E_{k,\ell}(L))$. As $\Psi^{\vee}$ has integral values, it can be realized in 
$H^1_{cusp}(\Gamma,E_{k,\ell}(\mathcal{R}))$. Clearly $\Psi^{\vee}$ is a lift of $\Phi$ and this contradicts our starting assumption of non-liftability. To finish, observe that $H^i_{cusp}(\Gamma,E_{k,\ell}(\mathcal{R})) \simeq H^i_{cusp}(\Gamma,E_{k,\ell}(\mathcal{O})) \otimes_{\mathcal{O}} \mathcal{R}$.  
\end{proof}

\section{First cohomology}

I will now describe a method, first observed by Fox \cite{f}, that allows us to compute $H^1$ of any finitely presented group with coefficients in a finite dimensional module. It is well known that Bianchi groups are finitely presented. Presentations for many Bianchi groups are in the literature, see, for example, \cite{fgt}.

Let me illustrate the method through an example. A formal exposition is contained in \cite{fgt}. Let $w=\sqrt{-2}$ and $G=\psl_2(\Z[w])$. It is known that  
$$G= <A,B,U \mid B^2=(AB)^3=[A,U]=(BU^2BU^{-1})^2=1>$$
where $A,B,U$ can be realized as $\bigl( \begin{smallmatrix} 1 & 1  \\ 0 & 1 \\ \end{smallmatrix} \bigr),
 \bigl( \begin{smallmatrix} 0 & -1  \\ 1 & 0 \\ \end{smallmatrix} \bigr), 
\bigl( \begin{smallmatrix} 1 & w  \\ 0 & 1 \\ \end{smallmatrix} \bigr)$ respectively. 

Let $E$ be any $G$-module.
Given any cocycle $f: G \rightarrow E$, any value $f(X)$ can be expressed linearly in terms of the images $f(A),f(B),f(U)$
of the generators of $G$, e.g. $$f(ABU)=f(A)\cdot BU+f(B) \cdot U+f(U).$$ 
Moreover, $f(A),f(B),f(U)$ satisfy the linear equations coming from the relations of the presentation. For example, 
$$B^2=1 \Longrightarrow f(B)(B+1)=0.$$
Conversely, any pair $(x,y,z) \in E^3$ satisfying the linear equations coming the presentation gives uniquely a cocycle. 
Thus the space of cocycles can be seen as the kernel of the matrix corresponding to this linear system. 
One gets the coboundaries similarly and hence computes $H^1(G,E)$ as the quotient of the two spaces. 

Note that to compute with a finite index subgroup $\Gamma$ of $G$, it is not practical to apply the method to a presentation 
of $\Gamma$ (which can be derived from that of $G$ once the coset representatives are known). 
It is best to use Shapiro's lemma and compute $H^1(G,\textrm{Coind}_{\Gamma}^G(E))$.

\subsection{Data on the integral first cohomology}

I have implemented the above algorithm in MAGMA \cite{magma} for the five Euclidean imaginary quadratic fields $K=\Q(\sqrt{-d})$ with 
$d=1,2,3,7,11.$ In the following, let $\mathcal{O}_d$ denote the corresponding ring of integers.

By the theory of modules over principal ideal domains, we know that our $\mathcal{O}$-module $H^1(\Gamma, E(\mathcal{O}))$ has a 
decomposition
$$H^1(\Gamma, E(\mathcal{O})) \simeq \mathcal{O} / (a_1) \oplus \hdots \oplus \mathcal{O} / (a_m) \oplus \mathcal{O}^r.$$
with $a_i \not = 0,1$ and $a_i | a_{i+1}$. The $a_i$ are called elementary divisors and are unique up to multiplication by units. The exponent $r$ is called the rank.

Below I report on some of my computations. 
Observe that the torsion is always ``small" as proved in Proposition \ref{main}. The only exception to this 
is the ramifying prime which always appears in the torsion. We show the rank in a 
separate column as it provides a means to check our work against the dimension computations of \cite{fgt}.  

\begin{center}
\begin{tabular}{|r|c|l|c|} 
\multicolumn{4}{c}{data for $H^1(\psl_2(\mathcal{O}_1),E_{n,n}(\mathcal{O}_{1}))$ } \\ \hline
$n$ & norms of elt. divisors & primes & rank \\ \hline
0&                                                     &&0 \\ \hline  
1& [ 4]                                                &(2)&1 \\ \hline
2& [ 2, 16 ]                                           &(2)&0 \\ \hline
3& [ 2, 2, 4]                                          &(2)&1 \\ \hline
4& [ 2, 2, 2, 8, 1152 ]                                &(2,3)&0   \\ \hline
5& [ 2, 2, 2, 2, 4, 4 ]                                &(2)&2 \\ \hline
6& [ 2, 2, 2, 2, 2, 2, 8, 8, 800 ]                     &(2,5)&0 \\ \hline
7& [ 2, 2, 2, 2, 2, 2, 2, 2, 4, 4, 4]                  &(2)&3 \\ \hline
8& [ 2, 2, 2, 2, 2, 2, 2, 2, 2, 4, 8, 8, 32, 225792 ]  &(2,3,7)&0 \\ \hline
9& [ 2, 2, 2, 2, 2, 2, 2, 2, 2, 2, 2, 2, 4, 4, 4, 4, 4 ] & (2)& 3 \\ \hline 
10& [ 2, 2, 2, 2, 2, 2, 2, 2, 2, 2, 2, 2, 2, 2, 2, 8, 8, 8, 8, 16, 288] &(2,3)& 1 \\ \hline
\end{tabular}
\end{center}

\vspace{.1 in}

\begin{center}
\begin{tabular}{|r|c|l|c|} 
\multicolumn{4}{c}{data for $H^1(\psl_2(\mathcal{O}_2),E_{n,n}(\mathcal{O}_{2}))$ } \\ \hline
$n$ & norms of elt. divisors & primes & rank \\ \hline
1& [ 8]                                                     &(2)&1  \\ \hline
2& [ 2, 32]                                                 &(2)&1   \\ \hline
3& [ 2, 2, 8]                                               &(2)& 2   \\ \hline
4& [ 2, 2, 2, 8, 1152]                                      &(2,3)&1   \\ \hline
5& [ 2, 2, 2, 2, 8, 8]                                      &(2)&3   \\ \hline
6& [ 2, 2, 2, 2, 2, 2, 8, 8, 7200]                          &(2,3,5)&2  \\ \hline
7& [ 2, 2, 2, 2, 2, 2, 2, 2, 8, 8, 8]                       &(2)&4   \\ \hline
8& [ 2, 2, 2, 2, 2, 2, 2, 2, 2, 4, 8, 8, 32, 225792]        &(2,3,7)&2   \\ \hline
9& [ 2, 2, 2, 2, 2, 2, 2, 2, 2, 2, 2, 2, 8, 8, 8, 8, 8]     &(2)&5  \\ \hline
10& [ 2, 2, 2, 2, 2, 2, 2, 2, 2, 2, 2, 2, 2, 2, 2, 8, 8, 8, 8, 32, 288] &(2,3)&3  \\ \hline
\end{tabular}
\end{center}

\vspace{.1 in}

\begin{center}
\begin{tabular}{|r|c|l|c|} 
\multicolumn{4}{c}{data for $H^1(\psl_2(\mathcal{O}_3),E_{n,n}(\mathcal{O}_{3}))$ } \\ \hline
$n$ & norms of elt. divisors & primes & rank \\ \hline
0&                                       && 0  \\ \hline
1& [ 3 ]                                 &(3)& 0\\ \hline
2& [ 3 ]                                 &(3)& 1\\ \hline
3& [ 3, 108 ]                            &(2,3)& 0\\ \hline
4& [ 3, 3, 12 ]                          &(2,3)& 0\\ \hline
5& [ 3, 3, 12 ]                          &(2,3)&  1\\ \hline
6& [ 3, 3, 3, 3, 10800 ]                 &(2,3,5)& 1 \\ \hline
7& [ 3, 3, 3, 3, 3, 12 ]                 &(2,3)& 1\\ \hline
8& [ 3, 3, 3, 3, 3, 12, 2352 ]           &(2,3,7)& 1\\ \hline
9& [ 3, 3, 3, 3, 3, 3, 3, 108, 972 ]     &(2,3)& 1 \\ \hline
\end{tabular}
\end{center}

\newpage
\section{Second cohomology} 

The main method I employ for computing the second cohomology is based on reduction theory as used in \cite{sv}. The cohomological dimension of Bianchi groups is 2 and 
the symmetric space they act on, namely the hyperbolic 3-space $\mathbb{H} \simeq \C \times \R^+$, is 3 dimensional. Reduction theory gives us a contractible 2 dimensional CW-complex inside $\mathbb{H}$ which is a deformation retract for the action of the Bianchi group. Moreover, the cellular action of the Bianchi group on the CW-complex is cocompact. This makes the CW-complex a suitable tool for cohomological computations. 

I will continue to focus on the Euclidean imaginary quadratic fields. The reduction theory for Bianchi groups has been worked out for these fields by Mendoza \cite{m} and Fl\"oge \cite{fl}. See also \cite{blw, fr}.

For an overview of Mendoza's construction, I refer readers to \cite{sv}. I will exhibit the method for the case of the Bianchi group $\Gamma=\psl_2(\Z[w])$ with $w=\sqrt{-2}$. 

Let $\mathcal{C}$ be the 2-dimensional CW-complex constructed by Mendoza for $\Gamma$. Then a fundamental cellular domain $\mathcal{F}$ for the action of $\Gamma$ on $\mathcal{C}$ is given by the area on the unit hemisphere centered at the origin of $\mathbb{H}$ above the rectangle in $\C \times \{ 0 \}$ with vertices $(\pm \frac{w}{2},0)$ and $(\frac{1}{2} \pm \frac{w}{2} ,0 )$.
 
Let 
$$a:=( \begin{smallmatrix} 1 & w  \\ w & -1 \\ \end{smallmatrix} ), \ \ b:=( \begin{smallmatrix} 1 & -1  \\ 1 & 0 \\ \end{smallmatrix} ), \ \  \ \ c:=( \begin{smallmatrix} 0 & -1  \\ 1 & 0 \\ \end{smallmatrix} ).$$

The stabilizers of the edges (1-cells) and the vertices (0-cells) of $\mathcal{F}$ are shown in the following picture \\
\vspace{.1 in}

\begin{center}
\setlength{\unitlength}{.4in}
\begin{picture}(3,4)
\linethickness{1pt}
\put(0,0){\makebox(0,0){$\bullet$}}
\put(0.4,0.3){\makebox(0,0){P$_1$}}
\put(0,0){\line(0,1){3}}
\put(0,0){\line(1,0){4}}
\put(2,0){\makebox(0,0){$\rangle$}}
\put(0,3){\makebox(0,0){$\bullet$}}
\put(0.4,2.6){\makebox(0,0){P$_4$}}
\put(0,3){\line(1,0){4}}
\put(4,3){\makebox(0,0){$\bullet$}}
\put(3.6,2.6){\makebox(0,0){P$_3$}}
\put(4,0){\makebox(0,0){$\bullet$}}
\put(2,3){\makebox(0,0){$\rangle$}}
\put(3.6,0.3){\makebox(0,0){P$_2$}}
\put(4,0){\line(0,1){3}}
\put(-0.05,3.3){\makebox(0,0)[r]{${\bf D}_2 \simeq \langle \bar{a},c \rangle $}}
\put(1.2,3.50){\makebox(0,0)[s]{$\langle \bar{a} \rangle \simeq {\bf C}_2$}}
\put(4,3.3){\makebox(0,0)[s]{$\langle b,\bar{a} \rangle \simeq {\bf A}_4$}}
\put(1.2,-0.50){\makebox(0,0)[s]{$\langle a \rangle \simeq {\bf C}_2$}}
\put(4.25,1.5){\makebox(0,0)[l]{$\langle b \rangle \simeq {\bf C}_3$}}
\put(-0.25,1.5){\makebox(0,0)[r]{${\bf C}_2 \simeq \langle c \rangle $}}
\put(-0.05,-0.25){\makebox(0,0)[r]{${\bf D}_2 \simeq \langle a,c \rangle $}}
\put(4.05,-0.25){\makebox(0,0)[l]{$ \langle a,b \rangle \simeq {\bf A}_4$}}
\end{picture}
\end{center}
\vspace{.5 in}

The horizontal edges are identified by the element $g= ( \begin{smallmatrix} 1 & w  \\ 0 & 1 \\ \end{smallmatrix} )$, that is 
$g P_1P_2=P_4P_3$. Thus the quotient by $\Gamma$ is a cylinder. Moreover, the stabilizer of the whole rectangle (2-cell) is trivial. 

From this combinatorial data, one can compute the (co)homology. One way to do this is to feed the data into the equivariant cohomology spectral sequence
$$E^{p,q}_1(M)=\displaystyle{\bigoplus_{\sigma
\in \Sigma_p}} H^q(\Gamma_{\sigma}, M)\Longrightarrow
H^{p+q}(\Gamma, M).$$
\noindent where $M$ is any $\mathbb Z\Gamma$-module and $\Sigma_p$ is a set of
representatives of all the $\Gamma$-orbits of the $p$-cells of $\mathcal{C}$. See page 164 of \cite{b} for a description. The homological version of this spectral sequence has been used in $\cite{sv},\cite{fr}$. See Section 10 of \cite{yasaki} for another method to extract the same information.

Let $\Gamma_i, \Gamma_{ij}$ stand for the stabilizers of the vertex P$_i$ and the edge between P$_i$ and P$_j$ respectively. 
Let $M$ be a right $\Gamma$-module over $\Z[w][\frac{1}{6}]$. As primes above 2 and 3 are inverted, the cohomology of the (finite) stabilizers vanish in degree greater than 0. Hence, we have $E^{p,q}_1(M) = 0$ for all $q > 0$. Therefore, the
spectral sequence is concentrated on the horizontal axis $q = 0$ and the cohomology of the cochain complex
$$E^{0,0}_1 \buildrel{d^{0,0}_1}\over{\longrightarrow} E^{1,0}_1(M) \buildrel{d^{1,0}_1}\over{\longrightarrow} E^{2,0}_1(M) $$
gives $H^*(\Gamma,M)$, that is 
$$H^0(\Gamma, M)= Ker (d^{0,0}_1) \ \ H^1(\Gamma, M)= Ker (d^{1,0}_1)/Im (d^{0,0}_1),
\ \ H^2(\Gamma, M)= M/Im (d^{1,0}_1).$$
Now with the appropriate substitutions, the cochain complex reads
$$\bigoplus_{vertex \ i}H^0(\Gamma_i, M)\buildrel{d^{0,0}_1}\over{\longrightarrow} \bigoplus_{edge \ ij} H^0(\Gamma_{ij}, M)\buildrel{d^{1,0}_1}\over{\longrightarrow}H^0(\langle Id \rangle, M).$$
Here $\langle Id \rangle$ is the trivial stabilizer of the 2-cell $\mathcal{F}$.

To explicitly compute $H^2$, it remains to describe the differential $d^{1,0}_1$. One can choose the orientation on $\mathcal{F}$ so that the differential map becomes as follows
$$M^{\Gamma_1}\oplus M^{\Gamma_2} \buildrel{d^{0,0}_1}\over{\longrightarrow} M^{\Gamma_{12}}\oplus M^{\Gamma_{23}}\oplus M^{\Gamma_{41}} \buildrel{d^{1,0}_1}\over{\longrightarrow}M.$$

$$d_1^{1,0}(m_{12},m_{23},m_{41})=m_{12}+ m_{23}+m_{41}-m_{12}\cdot g^{-1}$$

The information on the fundamental 2-cell for the groups $\psl_2(\mathcal{O}_d)$ with $-d=1,2,3,7,11$ is included in the article \cite{sv}, so I do not repeat it here. The same information for the groups $\pgl_2(\mathcal{O}_d)$ with $-d=1,2,3,7,11$ is not included in that article and one needs to go to the above mentioned thesis of Mendoza (although a few of them are contained in \cite{blw} as well) which is very hard to access from outside of Germany. So I will now describe the information on these groups in the pictorial form as above. \\

\noindent $\bullet \ \ \pgl_2(\mathcal{O}_1)$ \\

Put $i=\sqrt{-1}$. Let 
$$a:=( \begin{smallmatrix} 0 & i  \\ i & 0 \\ \end{smallmatrix} ), \ \ b:=( \begin{smallmatrix} 1 & -1  \\ 1 & 0 \\ \end{smallmatrix} ), \ \  \ \ c:=( \begin{smallmatrix} 0 & i  \\ 1 & 0 \\ \end{smallmatrix} ).$$

The stabilizers of the edges and the vertices of $\mathcal{F}$ are shown in the following picture \\
\vspace{.1 in}

\begin{center}
\setlength{\unitlength}{.4in}
\begin{picture}(3,4)
\linethickness{1pt}
\put(0,0){\makebox(0,0){$\bullet$}}
\put(3,0){\makebox(0,0){$\bullet$}}
\put(3,3){\makebox(0,0){$\bullet$}}
\put(3,0){\line(0,1){3}}
\put(0,0){\line(1,0){3}}
\put(0,0){\line(1,1){3}}
\put(-0.1,-0.05){\makebox(0,0)[r]{$ \langle a,c \rangle \simeq {\bf D}_4  $}}
\put(3.15,-0.05){\makebox(0,0)[l]{$ {\bf S}_3 \simeq \langle a,b \rangle  $}}
\put(3.15, 1.55){\makebox(0,0)[l]{$ {\bf C}_3 \simeq \langle b \rangle  $}}
\put(3.15,3.05){\makebox(0,0)[l]{$ {\bf S}_4 \simeq \langle b,c \rangle   $}}
\put(1,-0.50){\makebox(0,0)[s]{$\langle a \rangle \simeq {\bf C}_2$}}
\put(0,1.8){\makebox(0,0)[s]{$\langle c \rangle \simeq {\bf C}_2$}}
\end{picture}
\end{center}
\vspace{.5 in}

There are no identifications and the stabilizer of the triangle (2-cell) is trivial. 
\vspace{.1 in}

\noindent $\bullet \ \ \pgl_2(\mathcal{O}_2)$ \\

Put $w=\sqrt{-2}$. Let 
$$a:=( \begin{smallmatrix} 0 & 1  \\ 1 & 0 \\ \end{smallmatrix} ), \ \ b:=( \begin{smallmatrix} 1 & -1  \\ 1 & 0 \\ \end{smallmatrix} ), \ \  \ \ c:=( \begin{smallmatrix} w & 1  \\ 1 & 0 \\ \end{smallmatrix} ), \ \ d:=( \begin{smallmatrix} 0 & -1  \\ 1 & 0 \\ \end{smallmatrix} ).$$

The stabilizers of the edges and the vertices of $\mathcal{F}$ are shown in the following picture \\
\vspace{.1 in}

\begin{center}
\setlength{\unitlength}{.4in}
\begin{picture}(3,4)
\linethickness{1pt}
\put(0,0){\makebox(0,0){$\bullet$}}
\put(0,3){\makebox(0,0){$\bullet$}}
\put(4,3){\makebox(0,0){$\bullet$}}
\put(4,0){\makebox(0,0){$\bullet$}}
\put(0,0){\line(0,1){3}}
\put(0,0){\line(1,0){4}}
\put(0,3){\line(1,0){4}}
\put(4,0){\line(0,1){3}}
\put(-0.05,-0.25){\makebox(0,0)[r]{${\bf D}_2 \simeq \langle a,d \rangle $}}
\put(1.2,-0.50){\makebox(0,0)[s]{$\langle a \rangle \simeq {\bf C}_2$}}
\put(4.05,-0.25){\makebox(0,0)[l]{$ \langle a,b \rangle \simeq {\bf S}_3$}}
\put(4.25,1.5){\makebox(0,0)[l]{$\langle b \rangle \simeq {\bf C}_3$}}
\put(4,3.3){\makebox(0,0)[s]{$\langle b,c \rangle \simeq {\bf S}_4$}}
\put(1.2,3.50){\makebox(0,0)[s]{$\langle c \rangle \simeq {\bf C}_4$}}
\put(-0.05,3.3){\makebox(0,0)[r]{${\bf D}_4 \simeq \langle c,d \rangle $}}
\put(-0.25,1.5){\makebox(0,0)[r]{${\bf C}_2 \simeq \langle d \rangle $}}

\end{picture}
\end{center}
\vspace{.5 in}

There are no identifications and the stabilizer of the rectangle (2-cell) is trivial. 
\vspace{.1 in}

\noindent $\bullet \ \ \pgl_2(\mathcal{O}_3)$ \\

Put $w=\frac{1+\sqrt{-3}}{2}$. Let 
$$a:=( \begin{smallmatrix} 0 & 1 \\ 1 & 0 \\ \end{smallmatrix} ), \ \ b:=( \begin{smallmatrix} 1 & -1  \\ 1 & 0 \\ \end{smallmatrix} ), \ \  \ \ c:=( \begin{smallmatrix} 0 & w  \\ 1 & 0 \\ \end{smallmatrix} ).$$

The stabilizers of the edges and the vertices of $\mathcal{F}$ are shown in the following picture \\
\vspace{.1 in}

\begin{center}
\setlength{\unitlength}{.4in}
\begin{picture}(3,4)
\linethickness{1pt}
\put(0,0){\makebox(0,0){$\bullet$}}
\put(3,0){\makebox(0,0){$\bullet$}}
\put(3,3){\makebox(0,0){$\bullet$}}
\put(3,0){\line(0,1){3}}
\put(0,0){\line(1,0){3}}
\put(0,0){\line(1,1){3}}
\put(-0.1,-0.05){\makebox(0,0)[r]{$ \langle a,c \rangle \simeq {\bf A}_4  $}}
\put(1,-0.50){\makebox(0,0)[s]{$\langle a \rangle \simeq {\bf C}_2$}}
\put(3.15,-0.05){\makebox(0,0)[l]{$ {\bf S}_3 \simeq \langle a,b \rangle  $}}
\put(3.15, 1.55){\makebox(0,0)[l]{$ {\bf C}_3 \simeq \langle b \rangle  $}}
\put(3.15,3.05){\makebox(0,0)[l]{$ {\bf A}_4 \simeq \langle b,c \rangle   $}}
\put(0,1.8){\makebox(0,0)[s]{$\langle c \rangle \simeq {\bf C}_2$}}
\end{picture}
\end{center}
\vspace{.5 in}

There are no identifications and the stabilizer of the triangle (2-cell) is trivial. 
\vspace{.1 in}

\noindent $\bullet \ \ \pgl_2(\mathcal{O}_7)$ \\

Put $w=\frac{1+\sqrt{-7}}{2}$. Let 
$$a:=( \begin{smallmatrix} 0 & 1  \\ 1 & 0 \\ \end{smallmatrix} ), \ \ b:=( \begin{smallmatrix} 1 & -1  \\ 1 & 0 \\ \end{smallmatrix} ), \ \  \ \ c:=( \begin{smallmatrix} 1 & -w  \\ \overline{w} & -1 \\ \end{smallmatrix} ), \ \ d:=( \begin{smallmatrix} 0 & -1  \\ 1 & 0 \\ \end{smallmatrix} ).$$

The stabilizers of the edges and the vertices of $\mathcal{F}$ are shown in the following picture \\
\vspace{.1 in}

\begin{center}
\setlength{\unitlength}{.4in}
\begin{picture}(3,4)
\linethickness{1pt}
\put(0,0){\makebox(0,0){$\bullet$}}
\put(0,4){\makebox(0,0){$\bullet$}}
\put(3,3){\makebox(0,0){$\bullet$}}
\put(3,0){\makebox(0,0){$\bullet$}}
\put(1.5,3.5){\makebox(0,0){$\bullet$}}
\put(0,0){\line(0,1){4}}
\put(0,0){\line(1,0){3}}
\put(0,4){\line(3,-1){3}}
\put(3,0){\line(0,1){3}}
\put(-0.05,-0.25){\makebox(0,0)[r]{${\bf D}_2 \simeq \langle a,d \rangle $}}
\put(1,-0.50){\makebox(0,0)[s]{$\langle a \rangle \simeq {\bf C}_2$}}
\put(3.05,-0.25){\makebox(0,0)[l]{$ \langle a,b \rangle \simeq {\bf S}_3$}}
\put(3.25,1.5){\makebox(0,0)[l]{$\langle b \rangle \simeq {\bf C}_3$}}
\put(3.1,2.8){\makebox(0,0)[s]{$\langle b,c \rangle \simeq {\bf S}_3$}}
\put(2.3,3.5){\makebox(0,0)[s]{$\langle c \rangle \simeq {\bf C}_2$}}
\put(1.5,3.8){\makebox(0,0)[s]{${\bf D}_2$}}
\put(.7,4){\makebox(0,0)[s]{$\langle c \rangle$}}
\put(-0.05,4.3){\makebox(0,0)[r]{${\bf S}_3 \simeq \langle c,d \rangle $}}
\put(-0.25,2){\makebox(0,0)[r]{${\bf C}_2 \simeq \langle d \rangle $}}

\end{picture}
\end{center}
\vspace{.5 in}

The two adjacent short edges on the top are identified via $g= ( \begin{smallmatrix} 1 & -w  \\ 0 & -1 \\ \end{smallmatrix} )$ which fixes 
the vertex between them. Thus these two edges are oppositely oriented and the stabilizer of the vertex between them is ${\bf D}_2 \simeq \langle c, g \rangle$. Again the stabilizer of the whole 2-cell is trivial. 
\vspace{.1 in}

\noindent $\bullet \ \ \pgl_2(\mathcal{O}_{11})$ \\

Put $w=\frac{1+\sqrt{-11}}{2}$. Let 
$$a:=( \begin{smallmatrix} 0 & 1  \\ 1 & 0 \\ \end{smallmatrix} ), \ \ b:=( \begin{smallmatrix} 1 & -1  \\ 1 & 0 \\ \end{smallmatrix} ), \ \  \ \ c:=( \begin{smallmatrix} 1 & -w  \\ \overline{w} & -2 \\ \end{smallmatrix} ), \ \ d:=( \begin{smallmatrix} 0 & -1  \\ 1 & 0 \\ \end{smallmatrix} ).$$

The stabilizers of the edges and the vertices of $\mathcal{F}$ are shown in the following picture \\
\vspace{.1 in}

\begin{center}
\setlength{\unitlength}{.4in}
\begin{picture}(3,4)
\linethickness{1pt}
\put(0,0){\makebox(0,0){$\bullet$}}
\put(0,4){\makebox(0,0){$\bullet$}}
\put(3,3){\makebox(0,0){$\bullet$}}
\put(3,0){\makebox(0,0){$\bullet$}}
\put(1.5,3.5){\makebox(0,0){$\bullet$}}
\put(0,0){\line(0,1){4}}
\put(0,0){\line(1,0){3}}
\put(0,4){\line(3,-1){3}}
\put(3,0){\line(0,1){3}}
\put(-0.05,-0.25){\makebox(0,0)[r]{${\bf D}_2 \simeq \langle a,d \rangle $}}
\put(1,-0.50){\makebox(0,0)[s]{$\langle a \rangle \simeq {\bf C}_2$}}
\put(3.05,-0.25){\makebox(0,0)[l]{$ \langle a,b \rangle \simeq {\bf S}_3$}}
\put(3.25,1.5){\makebox(0,0)[l]{$\langle b \rangle \simeq {\bf C}_3$}}
\put(3.1,2.8){\makebox(0,0)[s]{$\langle b,c \rangle \simeq {\bf A}_4$}}
\put(2.3,3.5){\makebox(0,0)[s]{$\langle c \rangle \simeq {\bf C}_3$}}
\put(1.5,3.8){\makebox(0,0)[s]{${\bf S}_3$}}
\put(.7,4){\makebox(0,0)[s]{$\langle c \rangle$}}
\put(-0.05,4.3){\makebox(0,0)[r]{${\bf A}_4 \simeq \langle c,d \rangle $}}
\put(-0.25,2){\makebox(0,0)[r]{${\bf C}_2 \simeq \langle d \rangle $}}

\end{picture}
\end{center}
\vspace{.5 in}

The two adjacent short edges on the top are identified via $g= ( \begin{smallmatrix} 1 & -w  \\ 0 & -1 \\ \end{smallmatrix} )$ which fixes 
the vertex between them. Thus these two edges are oppositely oriented and the stabilizer of the vertex between them is ${\bf S}_3 \simeq \langle c, g \rangle$. Again the stabilizer of the whole 2-cell is trivial. 

\subsection{data on the integral second cohomology: level 1}

I have implemented the above algorithm in MAGMA. I have not inverted the primes above 2,3 and thus the computations may not give correct data on 2,3 torsion. Below I give a complete list of the primes that appear in the torsion part of the second cohomology of both $\psl$ and $\pgl$.
The large primes are highlighted in bold face. 

The data imply that if $(p)$ ramifies in $\mathcal{O}$ then there is 
$p$-torsion in the integral second cohomology (except the case $k=\ell=0$) but I have not been able to prove that this is always the case.

\begin{center} 
\begin{tabular}{|c|c|c|} 
\multicolumn{3}{c}{$H^2(\psl_2(\mathcal{O}_1),E_{n,n}(\mathcal{O}_1))$} \\ \hline
$n$ & primes & rank \\ \hline
1&  [ ]     & 1   \\ \hline
2& [ 2 ]     & 1  \\ \hline
3& [ 2, 3]   & 1  \\ \hline
4& [ 2, 3 ]  & 1  \\ \hline
5& [ 2 ]     & 2  \\ \hline
6& [ 2, 3, 5 ]      & 1  \\ \hline
7& [ 2, 3, 7 ]      & 3  \\ \hline
8& [ 2, 3, 5, 7 ]   & 1  \\ \hline
9& [ 2, 3 ]         & 3  \\ \hline
10& [ 2, 3, 5, 7 ]  & 2  \\ \hline
11& [ 2, 3, 5, 11 ] & 4  \\ \hline
12& [ 2, 3, 5, 7, 11 ]         & 1 \\ \hline
13& [ 2, 3, 5 ]                & 5 \\ \hline
14& [ 2, 3, 5, 7, 11, 13 ]     & 2 \\ \hline
15& [ 2, 3, 5, 7 ]             & 5 \\ \hline
16& [ 2, 3, 5, 7, 11, 13 ]     & 2  \\ \hline
17& [ 2, 3, 5, 7 ]             & 6 \\ \hline
18& [ 2, 3, 5, 7, 11, 13, 17, {\bf 19}, {\bf 23} ]          & 2   \\ \hline     
19& [ 2, 3, 5, 7, 13, 19 ]                      & 7  \\ \hline
20& [ 2, 3, 5, 7, 11, 13, 17, 19, {\bf 409}, {\bf 6997} ]   & 2   \\ \hline
21& [ 2, 3, 5, 7, {\bf 59} ]                          & 7   \\ \hline         
22& [ 2, 3, 5, 7, 11, 13, 17, 19, {\bf 13707791} ]    & 3   \\ \hline
23& [ 2, 3, 5, 7, 11, {\bf 23}, {\bf 113} ]                 & 8   \\ \hline     
24& [ 2, 3, 5, 7, 11, 13, 17, 19, 23, {\bf 1033}, {\bf 4457}, {\bf 18743} ] & 2 \\ \hline
25& [ 2, 3, 5, 7, 11, 13, 17, {\bf 1523} ] & 9 \\ \hline                           
\end{tabular}
\end{center}

\begin{center} 
\begin{tabular}{|c|c|c|} 
\multicolumn{3}{c}{$H^2(\pgl_2(\mathcal{O}_1),E_{n,n}(\mathcal{O}_1))$} \\ \hline
$n$ & primes & rank \\ \hline
1& [ ] &1 \\ \hline
2& [ 2]&1 \\ \hline
3& [ 2, 3]&1 \\ \hline
4& [ 2 ]&1 \\ \hline
5& [ 2 ]&2 \\ \hline
6& [ 2, 3, 5 ]&1 \\ \hline
7& [ 2, 3, 7 ]&2 \\ \hline
8& [ 2, 3, 5, 7 ]&1 \\ \hline
9& [ 2, 3 ]&3 \\ \hline
10& [ 2, 3, 5, 7 ]&2 \\ \hline
11& [ 2, 3, 11 ]&3 \\ \hline
12& [ 2, 3, 5, 11 ]&1 \\ \hline
13& [ 2, 3, 5 ]&4 \\ \hline
14& [ 2, 3, 5, 7, 11, 13 ]&2 \\ \hline
15& [ 2, 3, 5, 7 ]&4 \\ \hline
16& [ 2, 3, 5, 7, 13 ]&2 \\ \hline
17& [ 2, 3, 5, 7 ]&5 \\ \hline
18& [ 2, 3, 5, 7, 11, 17 ]&2 \\ \hline
19& [ 2, 3, 5, 19 ]&5 \\ \hline
20& [ 2, 3, 5, 7, 13, 17, 19, {\bf 409}]&  2 \\ \hline   
21& [ 2, 3, 5, 7 ]&6 \\ \hline
22& [ 2, 3, 5, 7, 11, 19 ]&3 \\ \hline
23& [ 2, 3, 5, 7, 11, 23 ]&6 \\ \hline
24& [ 2, 3, 5, 7, 11, 13, 17, 23, {\bf 1033} ]&  2 \\ \hline   
25& [ 2, 3, 5, 7, 11, 17 ]&7 \\ \hline     
26& [ 2, 3, 5, 7, 11, 13, 19, 23, {\bf 157}, {\bf 683} ]  &  3 \\ \hline 
27& [ 2, 3, 5, 7 ] &  7 \\ \hline 
28& [ 2, 3, 5, 7, 11, 13, 17, {\bf 664197637} ]    &  3 \\ \hline      
29& [ 2, 3, 5, 7, 11, 13, {\bf 89} ]         &  8 \\ \hline           
30& [ 2, 3, 5, 7, 11, 13, 19, 23, 29, {\bf 211}, {\bf 36312691} ]   &  3 \\ \hline  
\end{tabular}
\end{center}

\vspace{1 in}

\begin{center} 
\begin{tabular}{|c|c|c|} 
\multicolumn{3}{c}{$H^2(\psl_2(\mathcal{O}_2),E_{n,n}(\mathcal{O}_2))$} \\ \hline
$n$ & primes & rank \\ \hline
1& [  ]& 1  \\ \hline
2& [ 2 ]&1 \\ \hline
3& [ 2, 3 ]&2 \\ \hline
4& [ 2, 3 ]&1 \\ \hline
5& [ 2, 3, 5 ]&3 \\ \hline
6& [ 2, 3, 5 ]&2 \\ \hline
7& [ 2, 3, 5, 7 ]&4 \\ \hline
8& [ 2, 3, 5, 7 ]&2 \\ \hline
9& [ 2, 3, 5, 7, {\bf 31} ] &  5    \\ \hline                  
10& [ 2, 3, 5, 7 ]&3 \\ \hline
11& [ 2, 3, 5, 7, 11 ]&6 \\ \hline
12& [ 2, 3, 5, 7, 11, {\bf 37} ]  &  3   \\ \hline             
13& [ 2, 3, 5, 7, 11, 13, {\bf 547} ] &  7         \\ \hline   
14& [ 2, 3, 5, 7, 11, 13, {\bf 439}, {\bf 110281} ] &  4    \\ \hline
15& [ 2, 3, 5, 7, 11, 13, {\bf 61}, {\bf 163} ] &   8       \\ \hline
\end{tabular}
\end{center}

\vspace{.5 in}

\begin{center} 
\begin{tabular}{|c|c|c|} 
\multicolumn{3}{c}{$H^2(\pgl_2(\mathcal{O}_2),E_{n,n}(\mathcal{O}_2))$} \\ \hline
$n$ & primes & rank \\ \hline
1& [  ]& 1  \\ \hline
2& [ 2 ]& 1 \\ \hline
3& [ 2 ]& 2 \\ \hline
4& [ 2, 3 ]&1 \\ \hline
5& [ 2, 5 ]&3 \\ \hline
6& [ 2, 3, 5 ]&1 \\ \hline
7& [ 2, 3, 7 ]&4 \\ \hline
8& [ 2, 3, 5, 7 ]&1 \\ \hline
9& [ 2, 3 ]&5 \\ \hline
10& [ 2, 3, 5, 7 ]&2 \\ \hline
11& [ 2, 3, 5 ]&6 \\ \hline
12& [ 2, 3, 5, 7, 11, {\bf 37} ] &  1 \\ \hline                                 
13& [ 2, 3, 5, 13 ]&7 \\ \hline
14& [ 2, 3, 5, 7, 11, 13, {\bf 110281}] & 2 \\ \hline                          
15& [ 2, 3, 5, 7 ]&8 \\ \hline
16& [ 2, 3, 5, 7, 11, 13, {\bf 1671337} ] &   2 \\ \hline                       
17& [ 2, 3, 5, 7, {\bf 103} ]   &     9   \\ \hline                             
18& [ 2, 3, 5, 7, 11, 13, 17, {\bf 3812807473} ]  &    2  \\ \hline             
19& [ 2, 3, 5, 7, {\bf 907} ]     &    10   \\ \hline                           
20& [ 2, 3, 5, 7, 11, 13, 17, 19, {\bf 3511}, {\bf 879556698451244053}] & 2  \\ \hline
\end{tabular}
\end{center}

\vspace{1 in}

\begin{center} 
\begin{tabular}{|c|c|c|} 
\multicolumn{3}{c}{$H^2(\psl_2(\mathcal{O}_3),E_{n,n}(\mathcal{O}_3))$} \\ \hline
$n$ & primes & rank  \\ \hline
1& [ ] & 1   \\ \hline
2& [ 3 ]& 1   \\ \hline
3& [ 3 ]& 1   \\ \hline
4& [ 2, 3 ]& 1   \\ \hline
5& [ 2, 3, 5 ]& 1  \\ \hline
6& [ 2, 3, 5 ]& 2  \\ \hline
7& [ 2, 3, 5 ]& 2  \\ \hline
8& [ 2, 3, 7 ]& 1  \\ \hline
9& [ 2, 3, 7 ]& 2  \\ \hline
10& [ 2, 3, 5 ]& 3  \\ \hline
11& [ 2, 3, 5, 11 ]& 2  \\ \hline
12& [ 2, 3, 5, 7, 11 ]& 2  \\ \hline
13& [ 2, 3, 5, 11 ]& 3  \\ \hline
14& [ 2, 3, 5, 7, 13 ]& 3  \\ \hline
15& [ 2, 3, 5, 7, 13 ]& 3  \\ \hline
16& [ 2, 3, 5, 7, 11 ]& 3  \\ \hline
17& [ 2, 3, 5, 7, 17 ]& 3  \\ \hline
18& [ 2, 3, 5, 7, 13, 17 ]& 4  \\ \hline
19& [ 2, 3, 5, 7, 11, 17, {\bf 61} ]& 4         \\ \hline        
20& [ 2, 3, 5, 7, 19 ]& 3  \\ \hline
21& [ 2, 3, 5, 7, 13, 19, {\bf 151} ]& 4    \\ \hline            
22& [ 2, 3, 5, 7, 11, 17 ]& 5  \\ \hline
23& [ 2, 3, 5, 7, 11, 23, {\bf 103} ]& 4        \\ \hline        
24& [ 2, 3, 5, 7, 11, 13, 17, 19, 23, {\bf 53} ]& 4     \\ \hline
25& [ 2, 3, 5, 7, 11, 17, 23, {\bf 29}, {\bf 947} ] &  5     \\ \hline
26& [ 2, 3, 5, 7, 11, 13 ]& 5  \\ \hline
\end{tabular}
\end{center}

\begin{center} 
\begin{tabular}{|c|c|c|} 
\multicolumn{3}{c}{$H^2(\pgl_2(\mathcal{O}_3),E_{n,n}(\mathcal{O}_3))$} \\ \hline
$n$ & primes & rank  \\ \hline
1& [ ]& 1  \\ \hline
2& [ ]& 1  \\ \hline
3& [ 2, 3 ]& 1  \\ \hline
4& [ 2, 3 ]&1  \\ \hline
5& [ 2, 3, 5 ]&1  \\ \hline
6& [ 2, 3, 5 ]&1  \\ \hline
7& [ 2, 3 ]&2  \\ \hline
8& [ 2, 3, 7 ]&1  \\ \hline
9& [ 2, 3 ]&2  \\ \hline
10& [ 2, 3, 5 ]&2  \\ \hline
11& [ 2, 3, 5, 11 ]&2  \\ \hline
12& [ 2, 3, 5, 7, 11 ]&1  \\ \hline
13& [ 2, 3 ]&3  \\ \hline
14& [ 2, 3, 5, 7, 13 ]&2  \\ \hline
15& [ 2, 3, 5 ]&3  \\ \hline
16& [ 2, 3, 5, 7, 11 ]&2  \\ \hline
17& [ 2, 3, 5, 17 ]&3  \\ \hline
18& [ 2, 3, 5, 7, 13, 17 ]&2  \\ \hline
19& [ 2, 3, 5, 7 ]&4  \\ \hline
20& [ 2, 3, 5, 7, 19 ]&2  \\ \hline
21& [ 2, 3, 5, 7 ]&4  \\ \hline
22& [ 2, 3, 5, 7, 11, 17 ]&3  \\ \hline
23& [ 2, 3, 5, 11, 23 ]&4  \\ \hline
24& [ 2, 3, 5, 7, 11, 13, 19, 23, {\bf 53} ] &  2  \\ \hline     
25& [ 2, 3, 5, 7 ] & 5  \\ \hline
26& [ 2, 3, 5, 7, 11, 13 ] & 3 \\ \hline
27& [ 2, 3, 5, 7, 11 ]  & 5 \\ \hline
28& [ 2, 3, 5, 7, 11, 13, 17, 23 ] & 3 \\ \hline
29& [ 2, 3, 5, 7, 29 ] &  5 \\ \hline

\end{tabular}
\end{center}

\vspace{1 in}

\begin{center} 
\begin{tabular}{|c|c|c|} 
\multicolumn{3}{c}{$H^2(\psl_2(\mathcal{O}_7),E_{n,n}(\mathcal{O}_7))$} \\ \hline
$n$ & primes & rank \\ \hline
1& [ ]& 1  \\ \hline
2& [ 2, 7 ]&1  \\ \hline
3& [ 2, 3, 7 ]&1 \\ \hline
4& [ 2, 3, 7 ]&2 \\ \hline
5& [ 2, 3, 5, 7 ]&2 \\ \hline
6& [ 2, 3, 5, 7 ]&2 \\ \hline
7& [ 2, 3, 5, 7 ]&3 \\ \hline
8& [ 2, 3, 5, 7 ]&3 \\ \hline
9& [ 2, 3, 5, 7 ]&3 \\ \hline
10& [ 2, 3, 5, 7 ]&4 \\ \hline
11& [ 2, 3, 5, 7, 11 ]&4 \\ \hline
12& [ 2, 3, 5, 7, 11, {\bf 127} ]&6 \\ \hline
13& [ 2, 3, 5, 7, 11, 13, {\bf 31} ]&5 \\ \hline
14& [ 2, 3, 5, 7, 11, 13, {\bf 73} ]&5 \\ \hline
15& [ 2, 3, 5, 7, 11, 13, {\bf 271}, {\bf 431} ]&5 \\ \hline
16& [ 2, 3, 5, 7, 11, 13 ]&6 \\ \hline
17& [ 2, 3, 5, 7, 11, 13, 17, {\bf 37}, {\bf 67}, {\bf 89}, {\bf 101}, {\bf 277} ]&6 \\ \hline
18& [ 2, 3, 5, 7, 11, 13, 17, {\bf 43}, {\bf 457}, {\bf 2069}, {\bf 3323} ]&6 \\ \hline
\end{tabular}
\end{center}

\begin{center} 
\begin{tabular}{|c|c|c|} 
\multicolumn{3}{c}{$H^2(\pgl_2(\mathcal{O}_7),E_{n,n}(\mathcal{O}_7))$} \\ \hline
$n$ & primes & rank \\ \hline
1& [ ]&1 \\ \hline
2& [ 2 ]&1 \\ \hline
3& [ 2, 3, 7 ]&1 \\ \hline
4& [ 2, 3, 7 ]&1 \\ \hline
5& [ 2, 5, 7 ]&2 \\ \hline
6& [ 2, 3, 5, 7 ]&1 \\ \hline
7& [ 2, 3, 7 ]&3 \\ \hline
8& [ 2, 3, 5, 7 ]&1 \\ \hline
9& [ 2, 3, 7 ]&3 \\ \hline
10& [ 2, 3, 5, 7 ]&2 \\ \hline
11& [ 2, 3, 5, 7 ]&4 \\ \hline
12& [ 2, 3, 5, 7, 11, {\bf 127} ]&1 \\ \hline
13& [ 2, 3, 5, 7, 13 ]&5 \\ \hline
14& [ 2, 3, 5, 7, 11, 13, {\bf 73} ]&2 \\ \hline
15& [ 2, 3, 5, 7, {\bf 431} ]&5 \\ \hline
16& [ 2, 3, 5, 7, 11, 13 ]&2 \\ \hline
17& [ 2, 3, 5, 7, 17, {\bf 37} ]&6 \\ \hline
18& [ 2, 3, 5, 7, 11, 13, 17, {\bf 43}, {\bf 457}, {\bf 2069}, {\bf 3323} ]&2 \\ \hline
19& [ 2, 3, 5, 7, 13, 19, {\bf 311} ]&7 \\ \hline
20& [ 2, 3, 5, 7, 11, 13, 17, 19, {\bf 42197}, {\bf 12272815271} ]&2 \\ \hline
\end{tabular}
\end{center}

\begin{center} 
\begin{tabular}{|c|c|c|} 
\multicolumn{3}{c}{$H^2(\psl_2(\mathcal{O}_{11}),E_{n,n}(\mathcal{O}_{11}))$} \\ \hline
$n$ & primes & rank \\ \hline
1& [ ]&1 \\ \hline
2& [ 2 ]&2 \\ \hline
3& [ 2, 3, 11 ]&2 \\ \hline
4& [ 2, 3, 11 ]&2 \\ \hline
5& [ 2, 3, 5, 11 ]&3 \\ \hline
6& [ 2, 3, 5, 11 ]&4 \\ \hline
7& [ 2, 3, 5, 7, 11 ]&4 \\ \hline
8& [ 2, 3, 5, 7, 11 ]&4 \\ \hline
9& [ 2, 3, 5, 7, 11, {\bf 23} ]&5 \\ \hline
10& [ 2, 3, 5, 7, 11 ]&8 \\ \hline
11& [ 2, 3, 5, 7, 11, {\bf 37} ]&6 \\ \hline
12& [ 2, 3, 5, 7, 11 ]&6 \\ \hline
13& [ 2, 3, 5, 7, 11, 13, {\bf 43}, {\bf 19973} ]&7 \\ \hline
14& [ 2, 3, 5, 7, 11, 13 ]&8 \\ \hline
15& [ 2, 3, 5, 7, 11, 13, {\bf 31}, {\bf 47}, {\bf 1409}, {\bf 30817} ]&8 \\ \hline
16& [ 2, 3, 5, 7, 11, 13, 17, 19, {\bf 41}, {\bf 281} ]&8 \\ \hline
\end{tabular}
\end{center}

\vspace{1 in}
\begin{center} 
\begin{tabular}{|c|c|c|} 
\multicolumn{3}{c}{$H^2(\pgl_2(\mathcal{O}_{11}),E_{n,n}(\mathcal{O}_{11}))$} \\ \hline
$n$ & primes & rank \\ \hline
1& [ ]&1 \\ \hline
2& [ 2 ]&1 \\ \hline
3& [ 2 ]&2 \\ \hline
4& [ 2, 3, 11 ]&1 \\ \hline
5& [ 2, 11 ]&3 \\ \hline
6& [ 2, 3, 5, 11 ]&1 \\ \hline
7& [ 2, 3, 7, 11 ]&4 \\ \hline
8& [ 2, 3, 5, 7, 11 ]&1 \\ \hline
9& [ 2, 3, 11 ]&5 \\ \hline
10& [ 2, 3, 5, 7, 11 ]&4 \\ \hline
11& [ 2, 3, 5, 11 ]&6 \\ \hline
12& [ 2, 3, 5, 7, 11 ]&1 \\ \hline
13& [ 2, 3, 5, 11, 13 ]&7 \\ \hline
14& [ 2, 3, 5, 7, 11, 13 ]&2 \\ \hline
15& [ 2, 3, 5, 7, 11, {\bf 47} ]&8 \\ \hline
16& [ 2, 3, 5, 7, 11, 13, {\bf 41}, {\bf 281} ]&2 \\ \hline
17& [ 2, 3, 5, 7, 11, 17, {\bf 67} ]&9 \\ \hline 
18& [ 2, 3, 5, 7, 11, 13, 17, {\bf 449}, {\bf 20147}, {\bf 201797} ]&2 \\ \hline

\end{tabular}
\end{center}

\subsection{Congruence subgroups}
 Now let us focus on the second cohomology of congruence subgroups. For computational considerations, we will focus on the subgroups of the type
  $$ \Gamma_0(\mathfrak{a}): \Bigl \lbrace \bigl ( \begin{smallmatrix} a & b  \\ c & d \\ \end{smallmatrix} \bigr ) \in G : c \equiv 0 \mod \mathfrak{a} \Bigr \rbrace .$$
Here $G$ is the Bianchi group $\psl_2(\mathcal{O}_d)$ and $\mathfrak{a}$ is an ideal of $\mathcal{O}_d$, which is called the level.

\subsubsection{Trivial weight: torsion}

To compute the second cohomology with trivial weight, the approach employed in \cite{egm} is more efficient than the reduction theory approach used above. The idea is to compute the abelianization of the congruence subgroup $\Gamma$ using a (finite) presentation for the Bianchi group $G$ and the knowledge of the permutation action of the generators of $G$ on a set of 
coset representatives of $\Gamma$ in $G$. 

The relationship between first homology and the second cohomology is given by the Lefschetz duality. Let $R$ be any module in which 6 is invertible. Then for any $R[\Gamma]-$module $E$, we have
$$H_1(\Gamma, E) \simeq H^2_c(\Gamma,E)$$
where the right hand side is the cohomology with compact support, see Section 2. 

We are only interested in the case of the trivial coefficients $R=\Z[1/6]$. We need to study the exact sequence
$$H^1(\Gamma,R) \rightarrow H^1(U(\Gamma),R) \rightarrow H^2_c(\Gamma,R) \rightarrow H^2_{cusp}(\Gamma,R) \rightarrow 0.$$

Assume that $K$ has class number one and that $\Gamma = \Gamma_0(\p)$ where $\p$ is a prime ideal of residue degree one. It is easy to see that $\Gamma$ has only two cusps; $\{ 0,\infty \}$. That is $\abs{\Gamma \backslash \P}=2$ and the two classes are represented by the elements $( \begin{smallmatrix} 1 & 0 \\ 0 & 1 \\ \end{smallmatrix} ), ( \begin{smallmatrix} 0 & -1  \\ 1 & 0 \\ \end{smallmatrix} )$. 
If $-d \not = 1,3$, then the stabilizers of the cusps are free abelian of rank 2 and by Corollaire 3 of \cite{serre70} p.517, the image of 
$H^1(\Gamma,R)$ has rank 2 in $H^1(U(\Gamma),R)$ which has rank 4. For $-d=1,3$, the situation is complicated by the existence of torsion in the stabilizers of cusps. In this case, let $\Gamma_c^+:= \Gamma \cap U_{D_c}$ for each cusp $c$ of $\Gamma$ in the terminology of Section 2.1.
 Then $\Gamma_c / \Gamma_c^+ \simeq \mu := \langle ( \begin{smallmatrix} \varepsilon & 0 \\ 0 & \varepsilon^{-1} \\ \end{smallmatrix} )  \rangle $ where 
 $\varepsilon$ is a generator of the roots of unity in $\mathcal{O}_d$. The inflation-restriction sequence then gives 
 $$H^1(\Gamma_c,R) \simeq H^0( \mu , H^1(\Gamma_c^+,R)).$$
Now let us directly show that the latter is trivial. Without loss of generality, assume that $c=\infty$ and thus that 
$\Gamma^+_c := ( \begin{smallmatrix} 1 & *  \\ 0 & 1 \\ \end{smallmatrix} ) \cap \Gamma$. The action of $\mu$ on $H^1(\Gamma_c^+,R)$ is given as $(\tau f)(x)= f(\tau x \tau^{-1})\tau=f(\tau x \tau^{-1})$ for every $\tau \in \mu$ and every 1-cocyle $f : \Gamma_c^+ \rightarrow R$. Take $\varepsilon \in \mathcal{O}_d^*$ such that $\varepsilon^2=-1$ and put $\tau=( \begin{smallmatrix} \varepsilon & 0 \\ 0 & \varepsilon^{-1} \\ \end{smallmatrix} )$. Then for any element $x \in \Gamma_c^+$, $\tau x \tau^{-1}=x^{-1}$. The condition $f(x)=f(x^{-1})$ 
for every $x$ forces the 1-cocyle $f$ to be the trivial cocyle $f=0$. We are done.  
An alternative way is to use the geometric approach which I mostly avoided in this paper. Then our discussion here follows from the observation that the cross section of a cusp is an orbifold with underlying manifold a sphere in the cases $-d = 1,3$ and  is a torus in the remaining cases. 

I wrote programs to compute $\Gamma_0(\p)^{ab}$ in MAGMA for the Euclidean $\mathcal{O}_d$, that is, for $-d=1,2,3,7,11$. It is easily seen that the torsion get very large very quickly compared to the norm of the level of the congruence subgroup. Recall from the introduction that Grunewald and Schwermer speculated in \cite{gs} that no $p$-torsion appearing in abelianization of a finite index subgroup $\Gamma$ is greater then half of the index of $\Gamma$ inside the Bianchi group. In the case of 
$\Gamma_0(\mathfrak{p})$ , their speculation says that any $p$-torsion that appear in $\Gamma_{0}(\mathfrak{p})^{ab}$ should satisfy 
$$p \leq \dfrac{\textbf{N}\mathfrak{p}+1}{2}$$
where $\textbf{N}\mathfrak{p}$ is the norm of the prime ideal $\mathfrak{p}$.

The smallest primes that witness the falseness of this speculation for our five Bianchi groups $\psl_2(\mathcal{O}_d)$ are listed below in Table 
\ref{table: counter}.

\begin{table}[h]
\centering
\begin{tabular}{|c|c|c|c|}
 \hline
$d$ & Norm of $\mathfrak{p}$ & rank of $\Gamma_0(\mathfrak{p})^{ab}$ & prime torsion of $\Gamma_0(\mathfrak{p})^{ab}$ \\ \hline
1 & 401 & 0 & [2, 5, 41, 271 ] \\ \hline
2 & 193 & 2 & [2, 3, 23, 251 ] \\ \hline
3 & 937 & 0 & [2, 3, 13, 599 ] \\ \hline
7 & 137 & 2 & [2, 17, 83 ] \\ \hline
11 & 103 & 2 & [2, 3, 17, 19, 71 ] \\ \hline
\end{tabular}
\caption{smallest counter examples to Grunewald-Schwermer}
\label{table: counter}
\end{table}

My computations of $\Gamma_0(\mathfrak{p})^{ab}$ agree perfectly with that of \cite{egm} mentioned in the introduction in the part they overlap. It is easily observed that the primes in the torsion get to astronomical sizes even within the range $\textbf{N}\mathfrak{p} \leq 5000$. Below in Table 
\ref{table: sample} is a sample of the primes that appear in the torsion of $\Gamma_0(\mathfrak{p})^{ab}$ with $ 4900 \leq \textbf{N}\mathfrak{p} \leq 5000$ for the five Euclidean imaginary quadratic fields. The complete list is available on my website. 
It is interesting to observe that in the cases $-d=1,3$, where we have other units besides $\pm 1$, the torsion grows much slower than the other three cases. 

\begin{table} \footnotesize
\centering
\begin{tabular}{|c|c|} \hline
$\textbf{N}\mathfrak{p}$ &  {\bf some} of the primes that appear in the torsion of $\Gamma_0(\mathfrak{p})^{ab}$ \\ \hline
\multicolumn{2}{|c|}{$\psl(\mathcal{O}_1)$} \\ \hline
4909 & [ 2, 3, 7, 13, 409, 10691, 22871, 29423, 56980673, 71143433 ] \\ \hline
4933 & [ 2, 3, 37, 101, 137, 577, 947, 21169, 194981 ] \\ \hline
4937 & [ 2, 7, 37, 617, 10859, 108893, 4408530403, 157824962047 ] \\ \hline
4957 & [ 2, 3, 7, 13, 31, 59, 14347, 3051863, 9405667, 23132267 ] \\ \hline
4969 & [ 2, 3, 23, 71, 373, 191299, 39861006443, 8672729371087 ] \\ \hline
4973 & [ 2, 11, 13, 47, 71, 113, 127, 331, 6317, 7949, 39023, 628801, 2995319 ] \\ \hline
4993 & [ 2, 3, 5, 7, 11, 13, 101, 173, 798569, 5995036891, 18513420749 ] \\ \hline
\multicolumn{2}{|c|}{$\psl(\mathcal{O}_2)$} \\ \hline

4931 & [ $\hdots$ , 3772418780827, 67462419379713541, 442541106225737082232052179 ] \\ \hline
4937 & [ $\hdots$ , 1889149903, 7397090738497, 880941232181841675673769 ] \\ \hline
4969 & [ $\hdots$ , 2728733329370698225919458399, 114525595847400940348788195788260381871 ] \\ \hline
4987 & [ $\hdots$ , 1354882997352809, 167973141926075800477, 109210638303577813415629 ] \\ \hline
4993 & [ $\hdots$ , 15997185593, 14633678967206157243930187, 4844017554743814674462620193 ] \\ \hline

\multicolumn{2}{|c|}{$\psl(\mathcal{O}_3)$} \\ \hline
4903 & [ 3, 7, 19, 29, 37, 43, 61, 137, 191, 733 ] \\ \hline
4909 & [ 2, 3, 7, 13, 19, 47, 67, 409, 1409 ] \\ \hline
4933 & [ 2, 3, 5, 137, 173, 383, 719, 1451, 100057 ] \\ \hline
4951 & [ 3, 5, 7, 11, 271, 3797, 6696049 ] \\ \hline
4957 & [ 2, 3, 5, 7, 23, 43, 59, 233, 823, 62207 ] \\ \hline
4969 & [ 2, 3, 5, 7, 23, 181, 2591, 516336433 ] \\ \hline
4987 & [ 2, 3, 11, 71, 277, 619, 21977, 1971691 ] \\ \hline
4993 & [ 2, 3, 11, 13, 29, 727, 4153, 27127 ] \\ \hline
4999 & [ 2, 3, 7, 17, 29, 41, 83, 38593, 179623 ] \\ \hline

\multicolumn{2}{|c|}{$\psl(\mathcal{O}_7)$} \\ \hline

4909 & [ $\hdots$ , 3354447021713, 666100957349057134013, 13363557375430202095093 ] \\ \hline
4937 & [ $\hdots$ , 836083247742263, 60001748772648369971, 1344885261548364695671 ] \\ \hline
4943 & [ $\hdots$ , 94861335404089, 157213239530981, 345644733766517, 714087340201211 ] \\ \hline
4951 & [ $\hdots$ , 42137202713,11756096619570265637, 47745831545933513537 ] \\ \hline
4957 & [ $\hdots$ , 6803766726937001299, 21088956680308937473, 34130091188757085391 ] \\ \hline
4967 & [ $\hdots$ , 42061245937, 3414861551033731, 385786872173747641 ] \\ \hline
4993 & [ $\hdots$ , 16112554517, 22230923149, 47405513059, 17179435084786759 ] \\ \hline
4999 & [ $\hdots$ , 47183940647, 47747826462797, 176725513764138170761817312541116531 ] \\ \hline

\multicolumn{2}{|c|}{$\psl(\mathcal{O}_{11})$} \\ \hline

4909 & [ $\hdots$ , 491602700153184794115037, 3160753948740219890398523741106925031 ] \\ \hline
4931 & [ $\hdots$ , 59242366654994144915737, 397153057377536493107457514082773 ] \\ \hline
4933 & [ $\hdots$ , 471591580131222099301009, 753357254439534230416253 ] \\ \hline
4937 & [ $\hdots$ , 774606120056702384410790118960699805738139 ] \\ \hline
4943 & [ $\hdots$ , 49685906201385872741, 7533150099701393721041, 1806172579157695730540919793 ] \\ \hline
4951 & [ $\hdots$ , 32561299447966536475490232836221, 575858582707156517384453334853901 ] \\ \hline
4973 & [ $\hdots$ , 668079334182971453623, 2223356120717452698676440064717 ] \\ \hline
4987 & [ $\hdots$ , 26685596532560442049106969671, 121708009502005164710374726093 ] \\ \hline
4999 & [ $\hdots$ , 35270997998154652004835942597708494620078410433635847 ] \\ \hline
\end{tabular}
\caption{ a modest sample of large torsion occurring in $\Gamma_0(\mathfrak{p})^{ab}$}
\label{table: sample}
\end{table}

\subsubsection{Trivial weight: rank}

 In this section I will report on the rank of $H^2_{cusp}(\Gamma_0(\mathfrak{a}),\mathcal{O})$. This rank is clearly equal to the dimension of 
$H^2_{cusp}(\Gamma_0(\mathfrak{a}),\C)$ and thus its nonvanishing is conjecturally connected to abelian varieties of GL$_2-$type over imaginary quadratic fields (see \cite{crem, egm, sen_hyper}). Moreover, the vanishing of this rank in certain cases is equivalent to the existence of rational homology spheres (see \cite{lmr}). In this section I will report on my computations related to these two aspects.
 
 As explained in the previous subsection, the rank of $\Gamma_0(\mathfrak{a})^{ab}$ is related to the rank $r$ of $H^2_{cusp}(\Gamma_0(\mathfrak{a}),\mathcal{O})$. More precisely, when $\p$ is a prime ideal of residue degree one, our discussion above shows that 
 $$r = \textrm{rank}(\Gamma_0(\p)^{ab}), \ \ \ \text{for} \ \ -d=1,3$$
 and
 $$r+2 = \textrm{rank}(\Gamma_0(\p)^{ab}), \ \ \ \text{for} \ \ -d \not =1,3$$

I have computed the rank of $\Gamma_0(\p)^{ab}$ for prime ideals $\p$ of residue degree one and norm up to 45000 for $-d=1,3$, 30000 for  
$-d=2$, and 21000 for $-d=7,11$. I report on the distribution of prime levels according to the ranks in Table \ref{table: NRX} where I use $N_r(x)$ to denote the number of primes of residue degree one with norm $< x$ and such that $H^2_{cusp}(\Gamma_0(\mathfrak{a}),\mathcal{O})$ has rank $r$. It is curious that for all five $\mathcal{O}_d$, approximately $90 \%$ of the time the rank was 0. Note that in \cite{fgt}, Finis, Grunewald and Tirao used an efficient method which works with finite fields and approximated the ranks up to norm 60000 for $-d=1$. My computations for $-d=1$ agree with theirs in the part that they overlap except that the values $12113,12373$ are missing from their Table 10 and $12941$ is missing from their Table 11.

\begin{table}
\centering
\begin{tabular}{|c|c|c|c|c|c|c|c|c|c|c|} \hline
& \multicolumn{2}{|c|}{$d=-1$}&\multicolumn{2}{|c|}{$d=-2$}&\multicolumn{2}{|c|}{$d=-3$}
                                &\multicolumn{2}{|c|}{$d=-7$}&\multicolumn{2}{|c|}{$d=-11$} \\ \hline
$r$ & $N_r(45000)$& $\%$&$N_r(30000)$& $\%$&$N_r(45000)$& $\%$&$N_r(21000)$& $\%$&$N_r(21000)$& $\%$ \\ \hline
       0 &2061  &88.8 &  1480 & 91.8  & 2033  &87.4  & 1054   &89.5   & 1056&   89.7\\ \hline
       1 &177   &7.6  & 94   & 5.83   & 184   &7.91  & 89     &7.6    &  96 &   8.15\\ \hline
       2 &66    &2.8  & 31   & 1.92   & 82    &3.52  & 29     &2.5    &  22 &   1.90 \\ \hline
       3 &10    &0.4  & 5    & 0.31   & 21    &0.91  & 4      &0.3    &   2 &   0.17\\ \hline
       4 &4     &0.2  & 1    & 0.07   & 5     &0.22  & 0      &0      &   0 &    0\\ \hline
       5 &1     &0.04 & 0    & 0      & 0     &0     & 1      &0      &   0 &    0\\ \hline
       6 &1     &0.04 & 1    & 0.07   & 0     &0     & 0      &0.1    &   1 &    0.08\\ \hline
       7 &1     &0.04 & 0    & 0      & 1     &0.04  & 0      &0      &   0 &    0\\ \hline
$\geq 8$ &0     &0    & 0    & 0      & 0     &0     & 0      &0      &   0 &    0\\ \hline
$> 0$ &260   &11.2 & 132     & 8.2    & 293    &12.6  & 123    &10.5   & 121 &    10.3\\ \hline
\end{tabular}
\caption{distribution of  dimension of $\H^2_{cusp}(\Gamma_0(\mathfrak{p}),\C)$} 
\label{table: NRX}
\end{table}

It is believed that there are infinitely many prime ideals $\p$ of residue degree one such that $H^2_{cusp}(\Gamma_0(\mathfrak{p}),\C))=0$. 
On the other hand, in analogy with the conjecture that there are infinitely many elliptic curves over $\Q$ with prime conductor, see 
\cite{bs} p.97, it is reasonable to expect that there are infinitely many prime ideals $\p$ of residue degree one and 
$H^2_{cusp}(\Gamma_0(\mathfrak{p}),\C)) \not = 0$. 
In \cite{fgt}, in analogy with the distribution questions for elliptic curves (see \cite{bm}), the following question was posed (stated here in a slightly more general form).

\begin{question} Let $\mathcal{O}_d$ be the ring of integers of an imaginary quadratic number field. Is there constant $C_d$ such that the asymptotic relation
$$ \sum _{\p, \ \textbf{N}\p \leq x} \textrm{dim}\  H^1_{cusp}(\Gamma_0(\p),\C) \sim C_d \dfrac{x^{\frac{5}{6}}}{\textrm{log} \ x}$$
holds as $x$ goes to infinity, where the sum ranges over residue degree one prime ideals $\p \triangleleft \mathcal{O}_d$. 
\end{question}

Let us put $L_d(x):=\sum _{\p, \ \textbf{N}\p \leq x} \textrm{dim}\  H^1_{cusp}(\Gamma_0(\p),\C)$ for the ring $\mathcal{O}_d$ and 
$R(x):=x^{\frac{5}{6}} / \textrm{log} \ x$. Table \ref{table: elliptic} compares the two functions $L(x)$ and $R(x)$ within the range of my computations. \\

\begin{table}
\centering
\begin{tabular}{|c|c|c|c|c|c|} \hline
$x$ & $R(x)/ L_1(x)$ & $R(x) / L_2(x)$ & $R(x) / L_3(x)$ & $R(x) / L_7(x)$ & $R(x) / L_{11}(x)$ \\ \hline
3000 &  1.793 & 3.654 & 1.827 &   2.294 &    2.099 \\ \hline
6000 &  1.650 & 3.172 & 1.540 &   2.489 &    2.101 \\ \hline 
9000 &  1.720 & 3.334 & 1.435 &   2.462 &    2.281 \\ \hline 
12000  & 1.828 & 3.608 & 1.534 &  2.617 &    2.495 \\ \hline 
15000 &  1.927 & 3.610 & 1.524 &  2.662 &    2.731  \\ \hline 
18000 &  1.950 & 3.292 & 1.482 &  2.457 &    2.678  \\ \hline 
21000 &  1.912 & 3.114 & 1.575 &  2.464 &    2.642  \\ \hline 
24000 &  1.801 & 2.993 & 1.543 &  -     &   -  \\ \hline 
27000 &  1.782 & 3.000 & 1.594 &  -     &   -  \\ \hline 
30000 & 1.781 & 2.884  & 1.591 &  -     &   -  \\ \hline 
33000 & 1.830 & -     & 1.632 &  -     &   -  \\ \hline 
36000 & 1.831 & -     & 1.627 &  -     &   -  \\ \hline 
39000 & 1.825 & -     & 1.612 &  -     &   -  \\ \hline
42000 & 1.885 & -     & 1.628 &  -     &   -  \\ \hline
45000 & 1.887 & -     & 1.607 &  -     &   -  \\ \hline

\end{tabular}
\caption{data related to the asymptotics of nonvanishing of cuspidal cohomology}
\label{table: elliptic}
\end{table}

In \cite{cd}, Calegari and Dunfield constructed a family of commensurable arithmetic rational homology 3-spheres, that is, commensurable arithmetic Kleinian groups $\Gamma$ such that $H_1(\Gamma \backslash \mathbb{H}, \Q)\simeq H_1(\Gamma,\Q)=0$. In \cite{lmr}, Long, Maclahlan and Reid asked the question whether there are infinitely many commensurability classes of arithmetic rational homology 3-spheres. In the same paper, they posed the following two conjectures (they are slightly rephrased in an equivalent form that fits better with this paper). 

\vspace{.1 in}
\begin{conjecture} 
\begin{enumerate}
\item There exist infinitely many pairs of prime ideals $\{ \p_1, \p_2 \} \subset \Z[i]$ such that 
      $$H^2_{cusp}(\Gamma_0(\p_1 \p_2),\Q)=0.$$
\item Let $\p=(1+i)$. There are infinitely many prime ideals $\mathfrak{q} \subset \Z[i]$ with ${\bf N}\mathfrak{q} = 1 \mod 12$ such that 
      $$H^2_{cusp}(\Gamma_0(\p \mathfrak{q}),\Q)=0.$$      
\end{enumerate}
\end{conjecture}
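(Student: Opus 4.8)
The plan is to translate the statement into the language of cuspidal Bianchi modular forms and then argue by contradiction via Galois representations, in the spirit of Calegari--Dunfield \cite{cd}. By the Eichler--Shimura--Harder isomorphism recalled in Remark \ref{esh}, for $K=\Q(i)$ and any level $\mathfrak{n}$ one has $H^2_{cusp}(\Gamma_0(\mathfrak{n}),\C)\simeq H^1_{cusp}(\Gamma_0(\mathfrak{n}),\C)$, the space of parallel weight $2$ cuspidal Bianchi forms of level $\mathfrak{n}$ (newforms together with the oldforms coming from the divisors of $\mathfrak n$). Since it is classical that $H^1_{cusp}(\psl_2(\Z[i]),\C)=0$, the oldform contribution vanishes as soon as each $H^2_{cusp}(\Gamma_0(\p_i),\C)=0$, so the real content of both parts is the vanishing of the \emph{new} subspace at a squarefree level with two prime factors.

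Concretely, I would fix an auxiliary prime $\ell$ (say $\ell=2$, as in \cite{cd}) and impose splitting and congruence conditions on the primes $\p_i$ — respectively on $\mathfrak q$ in part (2), compatible with $\mathbf N\mathfrak q\equiv 1\bmod 12$ — cut out by Chebotarev in a suitable ray class field of $K$. Assuming for contradiction that $H^2_{cusp}(\Gamma_0(\mathfrak n),\Q)\neq 0$ produces a cuspidal Hecke eigenclass and hence, by the now-available construction of Galois representations attached to Bianchi eigenclasses, an irreducible $\rho\colon G_K\to\mathrm{GL}_2(\overline{\Q}_\ell)$ unramified outside $\mathfrak n\ell$ with prescribed (Steinberg, or unramified-twist) local behaviour at the $\p_i$. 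Reducing modulo $\ell$ and semisimplifying, the congruence conditions on the $\p_i$ should force $\overline\rho$ to be reducible; one then controls the two characters occurring in $\overline\rho$ by class field theory over $K$, and arranges the conditions on the $\p_i$ so that the relevant ray class group has no $\ell$-torsion, contradicting the existence of the Eisenstein congruence. Running this over the infinitely many Chebotarev-admissible pairs produces the two families.

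The main obstacle is exactly the step that is genuinely open: ruling out cuspidal eigenclasses whose residual representation $\overline\rho$ is irreducible with large image. There is no arithmetic obstruction to such forms; their absence at a given level is a sparseness phenomenon — the count of elliptic curves and abelian varieties of $\mathrm{GL}_2$-type over $K$ of conductor $\leq x$ grows like $x^{5/6+o(1)}$, so most conductors are not realized, but proving that infinitely many conductors of this special shape are not realized is not accessible by present techniques, and even the analogue over $\Q$ (infinitely many products of two primes that are not conductors of elliptic curves) is out of reach. One could instead try to combine the experimental data behind Table \ref{table: NRX} with an effective upper bound for $\dim H^2_{cusp}(\Gamma_0(\p_1\p_2),\C)$ coming from the Selberg/Arthur trace formula; but converting an average upper bound into the \emph{exact} vanishing infinitely often is precisely the crux, and I do not see how to do this unconditionally. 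Accordingly, what I would actually deliver is the conditional reduction above together with the numerical evidence, leaving the conjecture itself open.
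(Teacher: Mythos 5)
You have correctly recognized that this statement is a \emph{conjecture} (due to Long, Maclachlan and Reid, \cite{lmr}), and the paper itself offers no proof of it: the author's only contribution here is computational evidence, namely the ranks of $\Gamma_0(\p\mathfrak{q})^{ab}$ for $\p=(1+i)$ and the $423$ primes $\mathfrak{q}$ of norm $\leq 14850$ with $\mathbf{N}\mathfrak{q}\equiv 1 \bmod 12$, of which $245$ satisfy the vanishing, together with the observation that these are uniformly distributed. Your honest conclusion --- that the conjecture remains open and you can deliver at most a conditional reduction plus numerics --- is therefore exactly aligned with the paper's treatment. What you add beyond the paper is a concrete (if incomplete) strategy: translating the vanishing into the non-existence of new cuspidal eigenclasses via Eichler--Shimura--Harder, and attempting to rule these out through the attached mod-$\ell$ Galois representations and Eisenstein congruences in the style of Calegari--Dunfield. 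That sketch is reasonable as far as it goes, and you correctly identify the genuine obstruction: the residually irreducible eigenclasses cannot be excluded by any congruence or class-field-theoretic argument, and their absence is a sparseness phenomenon (reflected in the $x^{5/6}/\log x$ heuristic of the paper's Question 5.6) that no current technique converts into exact vanishing for infinitely many levels. In short, there is no gap in your reasoning because you claim nothing the paper does not also leave unproven; just be sure that in part (1) you also impose $H^2_{cusp}(\Gamma_0(\p_i),\C)=0$ for each factor before asserting that only the new subspace matters, since oldforms from the two prime divisors would otherwise contribute.
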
 

If the second conjecture holds, then using Jacquet-Langlands correspondence one gets (see \cite{lmr} p.29) a positive answer to the question of Long-MacLachlan-Reid stated above.

I computed the ranks of $\Gamma_0(\p \mathfrak{q})^{ab}$ where $\p=(1+i)$ and $\mathfrak{q} \subset \Z[i]$ prime with ${\bf N}\mathfrak{q} = 1 \mod 12$ of norm $\leq 14850$. There are 423 such prime ideals $\mathfrak{q}$, 245 of them satisfied the desired property that 
$$H^2_{cusp}(\Gamma_0(\p \mathfrak{q}),\Q)= \Gamma_0(\p \mathfrak{q})^{ab} \otimes \Q =0.$$ 
The uniform distribution of the primes with vanishing rank supports the second conjecture. 

\subsection{Asymptotics of torsion}

 Very recently M\"uller \cite{mueller} and Bergeron-Venkatesh \cite{bv} obtained significant results that relate the asymptotic behaviour of the size of the torsion in the homology of certain cocompact lattices in $\textrm{SL}_2(\C)$ to that of the volume of the associated 3-folds.
The following is a special case of the main result of Bergeron and Venkatesh.

\begin{theorem} Let $\{ \Gamma_n \}$ be a decreasing tower of cocompact arithmetic congruence subgroups of $\textrm{SL}_2(\C)$  
such that $\bigcap_n \Gamma_n = \{ 1 \}$. Put $X= \Gamma_1 \backslash \mathbb{H}$ where $\mathbb{H}$ is the hyperbolic 3-space. Then

$$ \lim_{n \rightarrow \infty} \dfrac{\log | H_1(\Gamma_n, E_{k,\ell})_{tor} |}{[\Gamma_1 : \Gamma_n]} 
                            = \dfrac{1}{6 \pi} \cdot c_{k,\ell} \cdot \textrm{vol}(X), \ \ \ \ k \not = \ell$$
where $E_{k,\ell}$ is standard module $\textrm{Sym}^k(\Z^2) \otimes \overline{\textrm{Sym}^{\ell}}(\Z^2).$ Here $c_{k,\ell}$ is a positive 
integer depending only on the non-equal parameters $k,\ell$.
\end{theorem}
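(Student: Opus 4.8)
\noindent\emph{Proof strategy.} The plan is to run the analytic-torsion argument of Bergeron--Venkatesh \cite{bv} (see also M\"uller \cite{mueller} for the analytic half in a closely related setting). For $n$ large $\Gamma_n$ is torsion-free, so $M_n:=\Gamma_n\backslash\H$ is a closed hyperbolic $3$-manifold and $E_{k,\ell}$ defines a flat bundle $\mathcal{E}_n$ on it. The crucial structural input, and the only place the hypothesis $k\neq\ell$ enters, is that $E_{k,\ell}$ is \emph{strongly acyclic} in the sense of \cite{bv}: the Hodge Laplacians $\Delta_q$ on $\mathcal{E}_n$-valued $q$-forms are bounded below by a constant $c=c(k,\ell)>0$ \emph{independent of $n$}. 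In particular $H^{\ast}(M_n;\mathcal{E}_n\otimes\C)=0$, so every $H_i(\Gamma_n,E_{k,\ell})$ is a finite $\mathcal{O}$-module.

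First I would reduce the left-hand side to a Reidemeister torsion. Feeding Poincar\'e duality on $M_n$ into the universal coefficient theorem over the PID $\mathcal{O}$, and using that the orders of $H_0,H_2,H_3$ of $(\Gamma_n,E_{k,\ell})$ are negligible after dividing by the index (for $H_0,H_3$ because $E_{k,\ell}$ is a nontrivial algebraic representation of the Zariski-dense $\Gamma_n$; for $H_2$ because it is torsion by acyclicity and its torsion is dual to that of $H_0$), one gets that the Reidemeister torsion $\tau(M_n,\mathcal{E}_n)$ of the cellular chain complex satisfies $\log\tau(M_n,\mathcal{E}_n)=\log\abs{H_1(\Gamma_n,E_{k,\ell})}+o\bigl([\Gamma_1:\Gamma_n]\bigr)$; the regulator terms do not appear since the rational homology vanishes. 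Next I would invoke the Cheeger--M\"uller theorem, in the version valid for unimodular flat bundles on closed odd-dimensional manifolds, to identify $\tau(M_n,\mathcal{E}_n)$ with the Ray--Singer analytic torsion $T(M_n,\mathcal{E}_n)$, a regularized alternating product of the determinants of the $\Delta_q$. So the problem becomes to evaluate $\lim_n \log T(M_n,\mathcal{E}_n)/[\Gamma_1:\Gamma_n]$.

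To do that I would write $\log T(M_n,\mathcal{E}_n)$ as a Mellin transform of $\sum_q(-1)^q q\,\mathrm{Tr}(e^{-t\Delta_q})$ and split the $t$-integral at $t=1$. On $(0,1)$ the Minakshisundaram--Pleijel heat expansion, computed once and for all on the common hyperbolic model, contributes a term proportional to $\mathrm{vol}(M_n)=[\Gamma_1:\Gamma_n]\,\mathrm{vol}(X)$ with a normalized remainder tending to $0$. On $(1,\infty)$ the uniform gap $\Delta_q\geq c$ gives $\mathrm{Tr}(e^{-t\Delta_q})=O\bigl(e^{-ct/2}\,\mathrm{vol}(M_n)\bigr)$, so after dividing by the index this tail is negligible. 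Combined with the convergence of the normalized spectral measures of the $\Delta_q$ on $M_n$ to the Plancherel measure of $\H$ (the limit-multiplicity property for the congruence tower, which here follows from strong approximation together with the trace formula), this shows that $\log T(M_n,\mathcal{E}_n)/[\Gamma_1:\Gamma_n]$ converges to $\mathrm{vol}(X)$ times the $L^2$-analytic torsion density $t^{(2)}(\H,\widetilde{E}_{k,\ell})$. Finally I would evaluate the constant: by the Fried-type formula for $L^2$-torsion on the rank-one space $\H$, $t^{(2)}(\H,\widetilde{E}_{k,\ell})$ is an elementary integral over $\R$ of a polynomial in $\lambda$ determined by the highest weight $(k,\ell)$ against the polynomial Plancherel density of $\mathrm{SL}_2(\C)$; carrying it out gives $t^{(2)}(\H,\widetilde{E}_{k,\ell})=\frac{1}{6\pi}c_{k,\ell}$ with $c_{k,\ell}$ a positive integer depending only on $(k,\ell)$ and vanishing for $k=\ell$ --- which is why that case must be excluded. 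Assembling the steps gives the stated limit.

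The hard part is the interchange of limits hidden in the previous paragraph, and more precisely the \emph{uniform} control of the long-time range of the heat integral. Without strong acyclicity --- that is, without the uniform spectral gap that $k\neq\ell$ provides --- eigenvalues of $\Delta_q$ could accumulate at $0$ along the tower and contribute on the order of $\mathrm{vol}(M_n)$, so that the limit might fail to exist, and even if it did there would be no reason for it to be positive (for $k=\ell$ the analogous constant is in fact $0$). By contrast, the homological algebra of the first step, the appeal to Cheeger--M\"uller, the short-time heat asymptotics, and the closed-form Plancherel integral for the constant are all comparatively routine.
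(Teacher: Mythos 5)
The paper offers no proof of this statement: it is quoted verbatim as a special case of the main theorem of Bergeron--Venkatesh \cite{bv} and is used only as motivation for the numerical experiments that follow, so there is no internal argument to compare yours against. Your sketch is a faithful outline of the actual Bergeron--Venkatesh strategy (strong acyclicity of $E_{k,\ell}$ for $k\neq\ell$, reduction of the homological torsion to Reidemeister torsion, the Cheeger--M\"uller theorem, and approximation of analytic torsion by the $L^2$-torsion density of $\H$ via the split heat integral). One correction to a side remark: the case $k=\ell$ is excluded because strong acyclicity fails there (the relevant Laplacians have no uniform spectral gap and the complex cohomology need not vanish along the tower), not because the constant vanishes --- for instance, with trivial coefficients the $L^2$-torsion density of $\H$ is still $\tfrac{1}{6\pi}\neq 0$ --- so your earlier explanation via the spectral gap is the correct one and the parenthetical claim that $c_{k,k}=0$ should be dropped.
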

Bianchi groups and their congruence subgroups are outside of the scope of these results as they are not cocompact. It is very interesting to numerically investigate whether similar asymptotic relations hold for them as well. 

I will compare the size of the torsion in the first homology and the volume of the associated 3-folds. As it is computationally costly to increase the weight, I will concentrate on increasing the level. 

Let $\Gamma_0(\p)$ where $\p$ is a prime ideal 
of $\mathcal{O}$ with residue degree one. Let $H_1(\Gamma_0(\p),\Z)_{tor}$ denote the torsion part of the first homology of $\Gamma_0(\p)$ with coefficients in $\Z$. Let $\text{vol}(\Gamma_0(\p) \backslash \H)$ denote the volume of the 3-fold 
$\Gamma_0(\p) \backslash \H$ where $\H$ is the hyperbolic 3-space. In light of the result of Bergeron-Venkatesh, the following is  
an interesting question.

\begin{question} With the notation of the above paragraph, is there a constant $C$, independent of $d$, such that the asymptotic relation
$$ \text{log}\abs{H_1(\Gamma_0(\p),\Z)_{tor}} \sim C \cdot \text{vol}(\Gamma_0(\p) \backslash \H)$$
holds as norm of the ideals $\p \triangleleft \mathcal{O}_d$, which are prime with residue degree one, tends to infinity?
\end{question} 

To computationally investigate the question, we need to approximate the volumes first.
Using the well-known formula, see \cite{gk},
$$\mathbb{V}_d:= \text{vol}(\psl_2(\mathcal{O}_d) \backslash \H)= \frac{\abs{\bigtriangleup_d}^{3/2}}{4 \pi^2} \ \zeta_{K_d}(2) \  ,$$
where $\bigtriangleup_d$ is the discriminant of the field $K_d$ and $\zeta_{K_d}$ is the Dedekind zeta function of $K_d$, we get
\begin{center}
\begin{tabular}{rl}
$\mathbb{V}_1 \simeq$ & $0.305321864725739671684867838311$ \\ 
$\mathbb{V}_2 \simeq$ & $1.00384100334119813727236488577$ \\  
$\mathbb{V}_3 \simeq$ & $0.169156934401608937503533759046$ \\
$\mathbb{V}_7 \simeq$ & $0.888914927816353263598904154202$ \\ 
$\mathbb{V}_{11} \simeq$ & $1.38260830790264587367165334450$ \\ 
\end{tabular}
\end{center}
Now for $\p \triangleleft \mathcal{O}_d$ prime of residue degree one over the rational prime $p$, we have
$$\text{vol}(\Gamma_0(\p) \backslash \H) = (p+1) \cdot \mathbb{V}_d.$$

I have collected data on the ratio of $\text{log}\abs{H_1(\Gamma_0(\p),\Z)_{tor}}$ to $\text{vol}(\Gamma_0(\p) \backslash \H)$ in the case of the five Euclidean $\mathcal{O}_d$. Ignoring the first 500 entries in each case, the average ratios read 
$0.054291, 0.053140, 0.055386, 0.053206, 0.053131$ respectively for $-d=1,2,3,7,11$. The range of my computations were 
up to norm $45000, 30000, 45000, 21000, 21000$ respectively. It is very significant that the ratio is very close to 
$$\frac{1}{6 \pi} \simeq 0.0530516476972984452562945877908 $$
which is the constant for the Lie group  $\textrm{SL}_2(\C)$ that appears in the above result of  Bergeron-Venkatesh. A small sample is given in Table \ref{table: log} where I use the convention $T_{\p}:=\text{log}\abs{H_1(\Gamma_0(\p),\Z)_{tor}}$ and 
$V_{\p}:=\text{vol}(\Gamma_0(\p) \backslash \H)$. The complete data is presented in my website.

\begin{small}
\begin{table}
\centering
\begin{tabular}{|c|c||c|c||c|c||c|c||c|c|} \hline
\multicolumn{2}{|c||}{$\mathcal{O}_1$} & \multicolumn{2}{|c||}{$\mathcal{O}_2$}  & \multicolumn{2}{|c||}{$\mathcal{O}_3$}  & \multicolumn{2}{|c||}{$\mathcal{O}_7$} & 
\multicolumn{2}{|c|}{$\mathcal{O}_{11}$} \\ \hline
$\textbf{N}\p $ & $T_{\p} / V_{\p}$ & $\textbf{N}\p $ & $T_{\p} / V_{\p}$ & $\textbf{N}\p $ & $T_{\p} / V_{\p}$ & $\textbf{N}\p $ & $T_{\p} / V_{\p}$ & $\textbf{N}\p $ & $T_{\p} / V_{\p}$  \\ \hline

44533	&	0.05342	&	27817	&	0.05338	&	44533	&	0.05288	&	20549	&	0.05337	&	19583	&	0.05368	 \\ \hline
44537	&	0.05391	&	27827	&	0.05247	&	44563	&	0.05288	&	20563	&	0.05414	&	19603	&	0.05324	 \\ \hline
44549	&	0.05250	&	27851	&	0.05300	&	44587	&	0.05559	&	20693	&	0.05411	&	19661	&	0.05212	 \\ \hline
44617	&	0.05467	&	27883	&	0.05463	&	44617	&	0.05279	&	20707	&	0.05410	&	19699	&	0.05244	 \\ \hline
44621	&	0.05509	&	27947	&	0.05282	&	44623	&	0.05352	&	20717	&	0.05297	&	19717	&	0.05331	 \\ \hline
44633	&	0.05390	&	27953	&	0.05221	&	44641	&	0.05558	&	20731	&	0.05269	&	19727	&	0.05327	 \\ \hline
44641	&	0.05317	&	27961	&	0.05439	&	44647	&	0.05581	&	20743	&	0.05353	&	19739	&	0.05346	 \\ \hline
44657	&	0.05203	&	28001	&	0.05342	&	44683	&	0.05509	&	20749	&	0.05172	&	19759	&	0.05385	 \\ \hline
44701	&	0.05520	&	28019	&	0.05161	&	44701	&	0.05791	&	20759	&	0.05121	&	19793	&	0.05410	 \\ \hline
44729	&	0.05351	&	28027	&	0.05359	&	44773	&	0.05280	&	20771	&	0.05204	&	19801	&	0.05296	 \\ \hline
44741	&	0.05355	&	28051	&	0.05231	&	44797	&	0.05357	&	20773	&	0.05411	&	19853	&	0.05157	 \\ \hline
44753	&	0.05533	&	28057	&	0.05238	&	44809	&	0.05239	&	20857	&	0.05258	&	19867	&	0.05442	 \\ \hline
44773	&	0.05604	&	28081	&	0.05214	&	44839	&	0.05606	&	20897	&	0.05290	&	19889	&	0.05311	 \\ \hline
44777	&	0.05573	&	28097	&	0.05198	&	44851	&	0.05300	&	20899	&	0.05470	&	19891	&	0.05352	 \\ \hline
44789	&	0.05172	&	28099	&	0.05353	&	44887	&	0.05332	&	20903	&	0.05326	&	19913	&	0.05324	 \\ \hline
44797	&	0.05480	&	28123	&	0.05271	&	44893	&	0.05427	&	20939	&	0.05348	&	19919	&	0.05191	 \\ \hline
44809	&	0.05220	&	28163	&	0.05233	&	44917	&	0.05308	&	20959	&	0.05395	&	19937	&	0.05389	 \\ \hline
44893	&	0.05476	&	28201	&	0.05140	&	44953	&	0.05433	&	20981	&	0.05243	&	19963	&	0.05383	 \\ \hline
44909	&	0.05227	&	28211	&	0.05325	&	44959	&	0.05292	&	20983	&	0.05425	&	19979	&	0.05266	 \\ \hline
44917	&	0.05281	&	28219	&	0.05185	&	44971	&	0.05547	&	21001	&	0.04985	&	19991	&	0.05346	 \\ \hline
44953	&	0.05441	&	28283	&	0.05312	&	44983	&	0.05481	&	21011	&	0.05473	&	20021	&	0.05318	 \\ \hline

\end{tabular}
\caption{the ratio the size of the torsion to the volume as level grows}
\label{table: log}
\end{table}
\end{small}

\begin{small}

\end{small}

 \vspace{.2 in}
  \noindent \textsc{Departament d'Algebra i Geometria, Facultat de Matem\'atiques, \\ 
           Universitat de Barcelona, Gran Via de les Corts Catalanes, 585, \\ Barcelona, Spain}
   
 \noindent \textit{E-mail address:} mehmet.sengun@uni-due.de\\

\end{document}